\def\Wnodal{W_{\scriptstyle\rm\!nodal}}
\def\P{\mathbb{P}}
\def\Pic{\mathop{\rm Pic}\nolimits}
\def\Aut{\mathop{\rm Aut}\nolimits}
\def\nef{\mathop{\rm nef}\nolimits}
\def\Orth{{\rm O}}
\def\Oup{\Orth^{\uparrow}}
\def\xbar{{\bar{x}}}
\def\nubar{{\bar{\nu}}}
\def\rhobar{{\bar{\rho}}}
\def\Wbar{\overline{W}}
\def\F{\mathbb{F}}     
\def\Z{\mathbb{Z}}     
\def\R{\mathbb{R}}     
\def\Atilde{\widetilde{A}}
\def\Dtilde{\widetilde{D}}
\def\Etilde{\widetilde{E}}
\def\tensor{\otimes}
\def\iso{\cong}
\def\sset{\subseteq}
\def\generatedby#1{\langle#1\rangle}
\def\gend#1{\generatedby{#1}}
\def\spanof#1{\generatedby{#1}}
\def\set#1#2{\{#1\mid#2\}}
\def\bigset#1#2{\bigl\{#1\bigm|#2\bigr\}}
\def\vertexouterradius{.2}
\def\vertexinnerradius{.15}
\def\vertex(#1){%
\fill (#1) circle (\vertexouterradius);%
\fill[white] (#1) circle (\vertexinnerradius)}
\def\mywidth{1}
\def\heavywidth{2.5}
\newtheorem{theorem}{Theorem}
\numberwithin{theorem}{section}
\numberwithin{equation}{section}
\numberwithin{figure}{section}
\newtheorem{lemma}[theorem]{Lemma}
\theoremstyle{remark}
\newtheorem*{remarks}{Remarks}
\begin{document}

\title[Congruence subgroups and Enriques surfaces]{Congruence subgroups and Enriques surface automorphisms}
\author{Daniel Allcock}
\thanks{Supported by NSF grant DMS-1101566}
\address{Department of Mathematics\\University of Texas, Austin}
\email{allcock@math.utexas.edu}
\urladdr{http://www.math.utexas.edu/\textasciitilde allcock}
\subjclass[2010]{%
Primary: 14J28
; Secondary: 20F55
}
\date{February 6, 2018}

\begin{abstract}
  We give conceptual proofs of some results on the automorphism group
  of an Enriques surface $X$, for which only computational proofs have
  been available.  Namely, there is an obvious upper bound on the
  image of $\Aut X$ in the isometry group of $X$'s numerical lattice,
  and we establish a lower bound for the image that is quite close to
  this upper bound.  These results apply over any algebraically closed
  field, provided that $X$ lacks nodal curves, or that all its nodal
  curves are (numerically) congruent to each other mod~$2$.  In this
  generality these results were originally proven by Looijenga and
  Cossec--Dolgachev, developing earlier work of Coble.
\end{abstract}

\maketitle

\section{Introduction}
\label{sec-introduction}

\noindent
Our goal in this paper is to give conceptual proofs of some known
computer-based results on the group of automorphisms of an Enriques
surface $X$.  These results are valid over any algebraically closed
field.  Of course, $\Aut X$ acts on $\Pic X$, hence on the quotient
$\Lambda$ of $\Pic X$ by its torsion subgroup~$\Z/2$.  This quotient
$\Lambda$ is called the numerical lattice, and is a copy of the famous
$E_{10}$ lattice.  One can describe it as $E_8\oplus U$ where we take
$E_8$ to be negative definite and
$U=\bigl(\begin{smallmatrix}0&1\\1&0\end{smallmatrix}\bigr)$.

  The main object of interest in this paper is the image $\Gamma$ of
  $\Aut X$ in $\Orth(\Lambda)$.  This is ``most'' of $\Aut X$, because
  the kernel of $\Aut X\to\Gamma$ is finite, and in fact very tightly
  constrained \cite[\S7.2]{CD-new}.  All of our arguments concern
  $\Lambda$ and various Coxeter groups acting on it.  For the
  underlying algebraic geometry we refer to \cite{CD-old},
  \cite{CD-new} and
  \cite{Dolgachev-introduction-to-Enriques-surfaces}.

Because $\Lambda$ has signature $(1,9)$, the positive norm vectors in
$\Lambda\tensor\R$ fall into two components.  Just one of these
contains ample classes; we call it the future cone and write
$\Oup(\Lambda)$ for the subgroup of $\Orth(\Lambda)$ preserving it.
Vinberg showed (theorem~\ref{thm-Vinberg-237} below) that
$\Oup(\Lambda)$ is the Coxeter group $W_{237}$ with diagram
\begin{equation}
\label{eq-E10-diagram}
\begin{tikzpicture}[line width=\mywidth, scale=.5]
\draw (0,0) -- (8,0);
\draw (2,0) -- (2,1);
\vertex (0,0);
\vertex (1,0);
\vertex (2,0);
\vertex (3,0);
\vertex (4,0);
\vertex (5,0);
\vertex (6,0);
\vertex (7,0);
\vertex (8,0);
\vertex (2,1);
\end{tikzpicture}%
\end{equation}
Besides preserving $\Lambda$, the main constraint on
$\Gamma$ is that it  must preserve the ample cone, hence its closure, the
numerically effective cone $\nef(X)$.  The nef cone is described in
terms of $X$'s nodal curves (i.e., smooth rational curves), which have
self-inter\-sec\-tion~$-2$ by the adjunction formula.  If $X$ has
nodal curves 
then $\nef(X)$ consists of the vectors in $\Lambda\tensor\R$ having
nonnegative inner product with all of them.  In the special case that
$X$ lacks nodal curves, $\nef(X)$ is the closure of the future cone.

The remaining constraint on $\Gamma$ concerns the $\F_2$ vector space
$V:=\Lambda/2\Lambda$.  Dividing lattice vectors' norms by~$2$ and
then reducing modulo~$2$ defines on $V$ an $\F_2$ quadratic form of
plus type.  ``Plus type'' means that $V$ has totally isotropic spaces
of largest possible dimension, in this case~$5$.  Although we won't use this
property, it does explain the presence of some superscripts $+$.  We
write $\Orth(V)$ for the isometry group of this quadratic form.  We will use ATLAS
notation for group structures and finite groups throughout the paper;
see \cite{ATLAS}, especially \S5.2.  In this notation, $\Orth(V)$ has
structure $\Orth_{10}^+(2) :2$.  (Caution: the ``$\Orth$'' in the ATLAS notation
$\Orth_{10}^+(2)$ indicates
the simple composition factor of the orthogonal group---in this
case an 
index~$2$ subgroup. Some authors write $\Orth_{10}^+(2)$ for $\Orth(V)$ itself.)

\begin{theorem}[The unnodal case]
\label{thm-unnodal-case}
Suppose an Enriques surface $X$ has no nodal curves.  Then
$\Gamma$ contains the level two congruence subgroup $W_{237}(2)$,
meaning the kernel of the natural map $\Oup(\Lambda)\to\Orth(V)$.
\end{theorem}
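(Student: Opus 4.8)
The plan is to realize each isometry in $W_{237}(2)$ by an automorphism of $X$, using the universal (K3) double cover $\pi\colon\tilde X\to X$ to convert the level-two congruence condition into a statement about periods. Writing $\Lambda_{K3}\iso U^{3}\oplus E_8^{2}$ for the second cohomology of $\tilde X$ and $\sigma$ for the covering involution, I would first record the standard splitting into the invariant and anti-invariant sublattices: $H^{+}\iso\Lambda(2)$, where the pullback $\pi^{*}$ multiplies the form by $2$, and $H^{-}\iso U\oplus U(2)\oplus E_8(2)$. The point of this setup is the discriminant-group computation $(H^{+})^{*}/H^{+}\iso\Lambda/2\Lambda=V$, under which the action of $\Orth(H^{+})=\Orth(\Lambda)$ (scaling the form does not change the isometry group) on the discriminant group is identified with its action on $V$, and the discriminant quadratic form becomes exactly the plus-type form on $V$ from the introduction. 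Thus $W_{237}(2)$ is precisely the group of isometries of $H^{+}$ acting trivially on its discriminant group.

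Given $g\in W_{237}(2)$, the key step is that $g$ extends across the period automatically. Since $g$ and $\mathrm{id}_{H^{-}}$ both act trivially on the respective discriminant groups, Nikulin's gluing criterion lets the pair $(g,\mathrm{id}_{H^{-}})$ extend to an isometry $\tilde g$ of the overlattice $\Lambda_{K3}$; by construction $\tilde g$ commutes with $\sigma^{*}$ and fixes $H^{-}\tensor\R$ pointwise. As the period of $\tilde X$ lies in $H^{-}\tensor\C$, the isometry $\tilde g$ is a Hodge isometry fixing the period. Moreover $\tilde g$ preserves the ample cone of $\tilde X$: because $\pi$ is finite \'etale, $\pi^{*}$ of an ample class is ample, and in the unnodal case $\nef(X)$ is the closed future cone, so $g$ preserves the future cone of $H^{+}$ and hence sends the ample class $\pi^{*}A$ to the ample class $\pi^{*}(gA)$. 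By the Torelli theorem for K3 surfaces $\tilde g$ is induced by an automorphism $\psi$ of $\tilde X$; since $\tilde g$ commutes with $\sigma^{*}$, uniqueness in Torelli forces $\psi$ to commute with $\sigma$, so $\psi$ descends to an automorphism of $X$ inducing $g$ on $\Lambda$. This gives $g\in\Gamma$ and hence $W_{237}(2)\sset\Gamma$.

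The conceptual heart is the identification of the congruence condition mod~$2$ on $\Lambda$ with triviality on the discriminant group of the invariant lattice, after which the geometry is forced. I expect the main obstacle to be that this argument is Hodge-theoretic, whereas the theorem is asserted over an arbitrary algebraically closed field, where complex periods and K3 Torelli are not directly available; removing this restriction is the real work, requiring either a characteristic-free form of the Enriques Torelli/surjectivity statement from \cite{CD-new} or a lifting-and-specialization argument for the pair $(X,g)$ together with a check that the realized automorphism specializes. The two places where I would be most careful are verifying that the discriminant forms of $H^{+}$ and $H^{-}$ are genuinely anti-isometric, so the gluing exists as claimed, and confirming that the unnodal hypothesis is exactly what places $\pi^{*}(\mathrm{ample})$ inside the ample cone of $\tilde X$.
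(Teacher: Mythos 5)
Your argument, as written, proves the theorem only over $\mathbb{C}$; it is essentially the Barth--Peters period argument (which the paper cites for the stronger generic statement $\Gamma=W_{237}(2)$). The lattice-theoretic skeleton is sound: $H^{+}\iso\Lambda(2)$ has discriminant group $\frac12\Lambda/\Lambda\iso V$ carrying the plus-type form, so $W_{237}(2)$ is exactly the subgroup acting trivially on that discriminant group; the discriminant forms of $H^{+}$ and $H^{-}$ are anti-isometric because their direct sum is a finite-index sublattice of the unimodular $\Lambda_{K3}$, so Nikulin gluing extends $(g,\mathrm{id}_{H^{-}})$; and strong Torelli plus the absence of $(-2)$-curves on the cover of an unnodal Enriques surface handles the ample-cone condition. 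The genuine gap is the one you name yourself and then set aside: the theorem is asserted over an arbitrary algebraically closed field. In characteristic $p>0$ there is no period and no Torelli theorem, and in characteristic~$2$ the canonical double cover need not even be a K3 surface (it can be a $\mu_2$- or $\alpha_2$-torsor whose total space is not smooth), so the entire setup $\pi\colon\tilde X\to X$ with $H^{\pm}$ collapses. A lifting-and-specialization argument for the pair $(X,g)$ is not routine and is not supplied; that ``real work'' is precisely the content of the theorem in positive characteristic, so the proposal as it stands establishes only the characteristic-zero case.

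For contrast, the paper sidesteps periods entirely. It constructs, for each splitting $\Lambda=E_8\oplus U$, a Bertini involution of $X$ --- the deck transformation of the $2$-to-$1$ map onto a $4$-nodal quartic del~Pezzo surface given by the linear system $|2E_1+2E_2|$ --- and this construction is valid over any algebraically closed field. These involutions visibly lie in $W_{237}(2)$ and generate a subgroup $S$ normal in $\Orth(\Lambda)$. The rest is pure Coxeter-group and lattice theory: the ``medium'' group $M=\gend{S,W_{236}}$ is shown to contain the translations $T_{2\lambda}$, is identified by cusp counting with the Coxeter group of figure~\ref{fig-simple-roots-for-subgroup-no-nodal-case} and with $\Oup(\Lambda)_\rhobar$, and an index computation then forces $S$ to be the full kernel of $\Oup(\Lambda)\to\Orth(V)$. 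Your route buys a cleaner conceptual identification of the congruence condition with triviality on a discriminant group, but only where Hodge theory applies; to prove the stated theorem you would need either to adopt the paper's constructive, characteristic-free route or to actually carry out the specialization argument you postpone.
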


So $\Gamma$ must be one of the finitely many groups between
$W_{237}(2)$ and $W_{237}$.  Because $\Oup(\Lambda)\to\Orth(V)$ is a
surjection, the possibilities correspond to subgroups of
$\Orth_{10}^+(2):2$.  Different $X$ can lead to different $\Gamma$, so
one cannot say much more without specifying $X$ more closely.  
If $X$ is unnodal then $\Aut X$ acts faithfully on $\Lambda$, by
\cite[Thm.\ 7.3.6]{CD-new}.  So one can identify $\Gamma$ with $\Aut X$.

In characteristic~$0$ one can describe $\Gamma$ in terms of the period
of the K3 surface which covers $X$.  In this way one can show that for
a generic Enriques surface without nodal curves, $\Gamma$ is exactly
$W_{237}(2)$; see \cite{Barth-Peters}.  The positive characteristic
analogue of this seems to be open.

\medskip
Given a nodal curve, regarded as an element of $\Pic X$, the
corresponding {\it nodal root} means its image under $\Pic
X\to\Lambda$.  It is called a root because it has norm~$-2$ and so the
reflection in it is an isometry of $\Lambda$.  Distinct nodal curves
have intersection number${}\geq0$, hence distinct images in
$\Lambda$.  So the nodal curves and nodal roots are in natural
bijection.

Given a nodal root, its corresponding {\it nodality class} means its
image in $V$, always an anisotropic vector.  Theorem~\ref{thm-1-nodal-case} below is
the analogue of theorem~\ref{thm-unnodal-case} in the ``$1$-nodality-class case'': when $X$ has
at least one nodal curve, and all nodal curves represent a single
nodality class.
We will use lowercase letters with bars to
indicate elements of $V$, whether or not we have in mind particular
lifts of them to $\Lambda$.  By definition of the quadratic form on
$V$, every nodality class $\nubar$ is anisotropic.  So its transvection
$\xbar\mapsto\xbar+(\xbar\cdot\nubar)\nubar$ is an isometry of $V$.
(The inner product on $V$ is defined by reducing inner products in $\Lambda$ mod~$2$,
	and carries strictly less information than the quadratic form.  Although the
	transvection in any element of $V$ preserves the inner product, only
	transvections in anisotropic vectors preserve the quadratic form.)

We indicate stabilizers using subscripts, for example
$\Oup(\Lambda)_\nubar$ in the next theorem.

\begin{theorem}[The $1$-nodality-class case]
\label{thm-1-nodal-case}
Suppose an Enriques surface $X$ has a single nodality class
$\nubar\in V$.  Then
the $\Oup(\Lambda)$-stabilizer $\Oup(\Lambda)_\nubar$ of $\nubar$ is the Coxeter group
\begin{equation}
\label{eq-big-diagram-for-1-nodal-case}
\begin{tikzpicture}[line width=\mywidth, scale=.5]
\draw (0,0) -- (8,0);
\draw[line width=\heavywidth](8,0) -- (9,0);
\draw (3,0) -- (3,1);
\vertex (0,0);
\vertex (1,0);
\vertex (2,0);
\vertex (3,0);
\vertex (4,0);
\vertex (5,0);
\vertex (6,0);
\vertex (7,0);
\vertex (8,0);
\vertex (9,0);
\vertex (3,1);
\end{tikzpicture}%
\end{equation}
Write $W_{246}$ for the subgroup generated by the reflections
corresponding to the leftmost~$10$ nodes.  
Then
\begin{enumerate}
\item
  \label{item-nef-is-union-of-W246-translates}
$\nef(X)$ is the union of
the $W_{246}$-translates of the fundamental chamber of the Coxeter group
\eqref{eq-big-diagram-for-1-nodal-case}.
\item
  \label{item-W246-is-full-stabilizer}
$\Gamma$ lies in $W_{246}$, which is the full
$\Oup(\Lambda)$-stabilizer of $\nef(X)$.
\item
  \label{item-Gamma-contains-W-bar-246(2)}
$\Gamma$ contains the subgroup $\Wbar_{\!246}(2)$ defined as the
  subgroup of $W_{246}$ that acts trivially on $\nubar^\perp\sset V$.
\item
  \label{item-transitivity-on-facets}
  $\Wbar_{\!246}(2)$ acts transitively on the facets of $\nef(X)$.
\item
  \label{item-transitivity-on-nodal-curves}
$\Aut X$ acts transitively on the nodal curves of~$X$.
\end{enumerate}
\end{theorem}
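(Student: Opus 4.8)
The plan is to deduce transitivity on nodal curves from parts~\eqref{item-Gamma-contains-W-bar-246(2)} and~\eqref{item-transitivity-on-facets}, after translating everything into the language of the facets of $\nef(X)$. The action of $\Aut X$ on nodal curves factors through its image $\Gamma$ acting on the nodal roots, and the correspondence between nodal curves and nodal roots is a $\Gamma$-equivariant bijection by the discussion preceding the theorem. So it suffices to prove that $\Gamma$ acts transitively on the nodal roots. Since part~\eqref{item-Gamma-contains-W-bar-246(2)} gives $\Wbar_{\!246}(2)\sset\Gamma$ and part~\eqref{item-transitivity-on-facets} gives transitivity of $\Wbar_{\!246}(2)$ on the facets of $\nef(X)$, the whole problem reduces to exhibiting a $W_{246}$-equivariant bijection between the facets of $\nef(X)$ and the nodal roots.

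First I would build that bijection as $r\mapsto\nef(X)\cap r^\perp$. Every facet of $\nef(X)$ lies on the wall $r^\perp$ of one of the defining inequalities $x\cdot r\geq0$, so the map hits every nodal root that supports a facet; and it is equivariant, since $g\bigl(\nef(X)\cap r^\perp\bigr)=\nef(X)\cap(gr)^\perp$ for any $g$ preserving $\nef(X)$. The point that needs an argument is that \emph{every} nodal root supports a facet, for which I would use that distinct nodal curves meet nonnegatively: if a nodal root $r$ were a nonnegative combination $\sum_i c_i r_i$ of the remaining nodal roots $r_i$, then $-2=r\cdot r=\sum_i c_i\,(r\cdot r_i)\geq0$, which is absurd. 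Hence no nodal root is redundant, each supports a genuine facet, and the map is a bijection.

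A cleaner route to the same dictionary is to read it off part~\eqref{item-nef-is-union-of-W246-translates}. Writing $D$ for the fundamental chamber of~\eqref{eq-big-diagram-for-1-nodal-case} and $\rho$ for the root of its rightmost node, the group $W_{246}$ is generated by the reflections in the other ten walls; so tiling $\nef(X)$ by the $W_{246}$-translates of $D$ makes those ten walls interior and exhibits the facets of $\nef(X)$ as precisely the $W_{246}$-orbit of the single wall $D\cap\rho^\perp$. Comparing with the defining inequalities of $\nef(X)$ then identifies $\rho$ with a nodal root and the full set of nodal roots with the orbit $W_{246}\cdot\rho$, which is exactly the equivariant bijection required.

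With the dictionary in place the conclusion is immediate: $\Wbar_{\!246}(2)$ is transitive on facets, hence on nodal roots, and $\Wbar_{\!246}(2)\sset\Gamma$, so $\Gamma$ is transitive on nodal roots; pulling back through the surjection $\Aut X\to\Gamma$ yields transitivity of $\Aut X$ on nodal curves. The only genuinely new content beyond the earlier parts is this facet/nodal-root correspondence, and the one place to be careful is that $\nef(X)$ may be cut out by infinitely many nodal roots---so in asserting that an irredundant inequality defines a facet I would invoke the local finiteness supplied by part~\eqref{item-nef-is-union-of-W246-translates} rather than a bare finite-polyhedron argument. I expect this to be the main, though modest, obstacle.
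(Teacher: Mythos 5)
Your proposal establishes only part~\eqref{item-transitivity-on-nodal-curves} of the theorem, and does so by assuming parts \eqref{item-nef-is-union-of-W246-translates}--\eqref{item-transitivity-on-facets}. But those parts, together with the identification of $\Oup(\Lambda)_\nubar$ as the Coxeter group \eqref{eq-big-diagram-for-1-nodal-case}, \emph{are} the content of the theorem. The paper's proof must first show that $\Oup(\Lambda)_\nubar$ has the stated simple roots (lemma~\ref{lem-simple-roots-for-nu-bar-stabilizer}, by cusp counting) and that $\nef(X)$ contains every $W_{246}$-translate of the fundamental chamber $C$ (lemmas \ref{lem-nef-cone-contains-future-cone-of-nu-perp} and~\ref{lem-nef-cone-contains-246-translates-of-C}). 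The heart of the matter, part~\eqref{item-Gamma-contains-W-bar-246(2)}, requires constructing Geiser, Bertini and Kantor involutions as genuine automorphisms of $X$ (via double covers of quartic del Pezzo surfaces and the Cayley cubic) and then proving, by the ``small/medium/large'' group argument of theorem~\ref{thm-Kantor-and-Bertini-inside-245} together with a cusp count and an index computation, that the subgroup $S$ they generate is exactly the subgroup of $W_{246}$ acting trivially on $\nubar^\perp\sset V$. Parts \eqref{item-nef-is-union-of-W246-translates} and \eqref{item-transitivity-on-facets} are then obtained simultaneously from the fact that the $S$-orbit of $\nu$ equals its $W_{246}$-orbit. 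None of this appears in your write-up, so as a proof of the statement it is missing essentially the entire argument.

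Within its limited scope your argument is correct and matches the paper, which also deduces \eqref{item-transitivity-on-nodal-curves} from \eqref{item-transitivity-on-facets} and the bijection between nodal roots and nodal curves --- in a single sentence. Your elaboration of the facet/nodal-root dictionary is sound: the equivariance of $r\mapsto\nef(X)\cap r^\perp$, the irredundancy argument from $r\cdot r=-2$ together with $r\cdot r_i\ge 0$ for distinct nodal roots, and the appeal to the tiling of part~\eqref{item-nef-is-union-of-W246-translates} for local finiteness are all fine. But you have fleshed out the one step the paper treats as immediate, while leaving unproven everything it labors over.
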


\begin{remarks}  
(a) The heavy edge in the diagram
  indicates parallelism of the corresponding hyperplanes in $H^9$, or
  equivalently that the last pair of roots has intersection number~$2$.  

  (b) Suppose $X$ is a generic nodal Enriques surface.  Then $\Aut X$ acts
  faithfully on $\Lambda$, so  it can be identified with
  $\Gamma$; see \cite[Prop.\ 7.4.1]{CD-new}.  Furthermore, in characteristic${}\neq2,3,5,7$ or $17$,
  $\Gamma$ coincides with $\Wbar_{\!246}(2)$, by
  \cite[Thm.\ 1]{Cossec-Dolgachev-published}.

  (c) $\Wbar_{\!246}(2)$ is the group called $\Wbar(2)$ by Cossec and
  Dolgachev \cite{Cossec-Dolgachev-published}.  But, contrary to what
  the notation might suggest, the kernel of our $W_{246}\to\Orth(V)$ is not
  the same as their $W(2)$.  This is because they define their
  congruence subgroups with respect to the Reye lattice rather than
  the Enriques lattice~$\Lambda$.  The Reye lattice has index~$2$ in
  $\Lambda$: it is the preimage of $\nubar^\perp\sset V$.
  %
\end{remarks}

Theorems \ref{thm-unnodal-case} and~\ref{thm-1-nodal-case} are modern forms of results of Coble
\cite[Thms.\ (4) and (30)]{Coble}. But Cossec--Dolgachev
\cite[p.~162]{CD-old} state that his proofs were incorrect.  They
credit Looijenga with the first proof of theorem~\ref{thm-unnodal-case}, never
published, and give  proofs of both theorems, following Looijenga's
ideas.  See
\cite[Thms.\ 2.10.1 and~2.10.2]{CD-old}.  Their proof of the first
relied on a lengthy hand computation, and the second required computer
assistance.

The author is grateful to RIMS (Kyoto University) for its hospitality
while working on this paper, to Igor Dolgachev for posing the problem
of improving on the computer computations in \cite{CD-old}, and to
Shigeru Mukai for stimulating discussions.

\section{The case of no nodal curves}
\label{sec-no-nodal}

\noindent
A root means a lattice vector of norm~$-2$.
In this section our model for $\Lambda$ is the span of the roots in
figure~\ref{fig-simple-roots-for-E10}.  Two
of them have inner product $1$ or~$0$, according to whether they are
joined or not.  By the theory of reflection groups \cite[V.4]{Bourbaki}, these $10$ vectors
form a set of simple roots for the group $W_{237}$ generated by their
reflections.  We will write $W_{235}$ resp.\ $W_{236}$ for
the subgroup generated by the reflections corresponding to the
top~$8$ resp.\ $9$ nodes.  Also, we will write $\Lambda_0$ for the
span of the first $8$ roots.  This is a copy of the $E_8$ lattice in
the ``odd'' coordinate system, namely
$$
\bigset{(x_1,\dots,x_8)}{\hbox{all $x_i$ in $\Z$ or all in $\Z+\frac12$, and
    $\sum x_i\equiv 2x_8$ mod~$2$}}
$$
from \cite[\S8.1 of \hbox{Ch.\ 4}]{Conway-Sloane}.  Its isometry
group is the $E_8$ Weyl group $W_{235}$, which has structure
$2\cdot\Orth_8^+(2):2$.  Sometimes we will write lattice vectors as
$(x;y,z)$ with $x\in\Lambda_0$ and $y,z\in\Z$, and inner product
$(x;y,z)\cdot(x';y',z')=x\cdot x'+y z' + y' z$.

\begin{figure}
\begin{tikzpicture}[line width=\mywidth, scale=.8]
\draw (0,0) -- (0,8);
\draw (0,6) -- (1,6);
\vertex (0,0);
\vertex (0,1);
\vertex (0,2);
\vertex (0,3);
\vertex (0,4);
\vertex (0,5);
\vertex (0,6);
\vertex (0,7);
\vertex (0,8);
\vertex (1,6);
\draw (0,0) node {\llap{$(00000000;-1,1)$}\kern15pt};
\draw (1,6) node {\kern15pt\rlap{$\textstyle(\frac-2\frac-2\frac-2\frac+2\frac+2\frac+2\frac+2\frac+2;0,0)$}};
\draw (0,2) node {\llap{$(000000{+}{-};0,0)$}\kern15pt};
\draw (0,3) node {\llap{$(00000{+}{-}0;0,0)$}\kern15pt};
\draw (0,4) node {\llap{$(0000{+}{-}00;0,0)$}\kern15pt};
\draw (0,5) node {\llap{$(000{+}{-}000;0,0)$}\kern15pt};
\draw (0,6) node {\llap{$(00{+}{-}0000;0,0)$}\kern15pt};
\draw (0,7) node {\llap{$(0{+}{-}00000;0,0)$}\kern15pt};
\draw (0,8) node {\llap{$({+}{-}000000;0,0)$}\kern15pt};
\draw (0,1) node {\llap{$\textstyle(\frac-2\frac-2\frac-2\frac-2\frac-2\frac-2\frac-2\frac+2;1,0)$}\kern15pt};
\end{tikzpicture}%
\caption{Simple roots for $W_{237}=\Oup(\Lambda)$, with respect to
  the norm $(x_1,\dots,x_8;y,z)^2=-x_1^2-\cdots-x_8^2+2y z$.  We have
  abbreviated $\pm1$ to $\pm$ and hidden some commas.}
\label{fig-simple-roots-for-E10}
\end{figure}

\begin{lemma}[The stabilizer of a null vector]
\label{lem-null-vector-stabilizer-237-case}
  $W_{236}$ is the full stabilizer $\Orth(\Lambda)_\rho$ of
  the null vector $\rho=(0;1,0)$.  It has structure $\Lambda_0:W_{235}$,
  where $\Lambda_0$ indicates the group of ``translations''
\begin{align}
\notag
&(x;0,0)\mapsto(x;-\lambda\cdot x,0)\\
\label{eq-translations-in-E9}
T_{\lambda\in\Lambda_0}:{}&(0;1,0)\mapsto(0;1,0)\\
\notag
&(0;0,1)\mapsto(\lambda;-\lambda^2/2,1)
\end{align}
\end{lemma}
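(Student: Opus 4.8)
The plan is to study $\Orth(\Lambda)_\rho$ through its action on the quotient lattice $\rho^\perp/\gend{\rho}$. Since $\rho=(0;1,0)$ is isotropic and $\Lambda_0\perp\rho$, we have $\rho^\perp=\gend{\rho}\oplus\Lambda_0$, so the quotient $\rho^\perp/\gend{\rho}$ inherits the negative-definite $E_8$ form carried by $\Lambda_0$. Any isometry fixing $\rho$ preserves $\rho^\perp$ and hence induces an isometry of this quotient, giving a homomorphism $\pi\colon\Orth(\Lambda)_\rho\to\Orth(\Lambda_0)=W_{235}$, where the last equality is the fact recalled above. I would prove the lemma by computing $\ker\pi$ and $\mathrm{im}\,\pi$ and then matching the resulting group with $W_{236}$.

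For the kernel, suppose $g$ fixes $\rho$ and acts trivially on $\rho^\perp/\gend{\rho}$. Then $g(x;0,0)=(x;c(x),0)$ for a linear functional $c\colon\Lambda_0\to\Z$, and here is where the arithmetic of $\Lambda$ enters: unimodularity of $E_8$ gives $\Lambda_0^*=\Lambda_0$, so $c(x)=-\mu\cdot x$ for a unique $\mu\in\Lambda_0$. Writing $f=(0;0,1)$ and $g(f)=(a;b,d)$, the isometry conditions coming from $\rho\cdot f=1$, $f^2=0$, and $f\cdot x=0$ force $d=1$, $a=\mu$, and $b=-\mu^2/2$, so $g=T_\mu$. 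Thus $\ker\pi$ is exactly the translation subgroup $\{T_\lambda\}\iso\Lambda_0$, recovering the formulas \eqref{eq-translations-in-E9}; one checks directly that $\lambda\mapsto T_\lambda$ is an injective homomorphism, i.e.\ $T_\lambda T_\mu=T_{\lambda+\mu}$. Surjectivity of $\pi$ is easy: letting $W_{235}=\Orth(\Lambda_0)$ act on $\Lambda_0$ and trivially on $\gend{\rho,f}$ defines a section, so $\Orth(\Lambda)_\rho\iso\Lambda_0\rtimes W_{235}$, with ATLAS structure $\Lambda_0:W_{235}$.

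It remains to identify this group with $W_{236}$. One inclusion is immediate: the ninth simple root $r=(\alpha;1,0)$, with $\alpha$ the half-integral root at node $(0,1)$, and all eight roots generating $W_{235}$ are orthogonal to $\rho$, so their reflections fix $\rho$ and $W_{236}=\gend{W_{235},s_r}\sset\Orth(\Lambda)_\rho$. For the reverse inclusion I would show $W_{236}$ already contains the whole semidirect product. It contains $W_{235}$ by definition, and a short computation gives $s_r s_\beta=T_\alpha$, where $\beta=(\alpha;0,0)\in\Lambda_0$ is the corresponding finite root and $s_\beta\in W_{235}$; this exhibits one translation in $W_{236}$. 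The conjugation identity $w T_\lambda w^{-1}=T_{w\lambda}$ for $w\in W_{235}$ then spreads $T_\alpha$ over the whole $W_{235}$-orbit of $\alpha$, which is the full $E_8$ root system. Since the roots span $\Lambda_0$ and $T_\lambda T_\mu=T_{\lambda+\mu}$, every $T_\lambda$ lies in $W_{236}$, so $W_{236}\supseteq\Lambda_0\rtimes W_{235}=\Orth(\Lambda)_\rho$, completing the identification.

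The hard part will be the kernel computation, since that is where the special arithmetic of $\Lambda$ is used: the conclusion $\ker\pi=\{T_\lambda\}$ hinges on $E_8$ being unimodular, so that the functional $c$ is represented by a lattice vector. A secondary point, easily dispatched, is that any isometry fixing the nonzero isotropic vector $\rho$ automatically preserves the future cone, since it cannot send $\rho\in\partial C^+$ into $\partial C^-$; thus $\Orth(\Lambda)_\rho\sset\Oup(\Lambda)$ and no discrepancy arises between writing $\Orth$ and the ambient $\Oup(\Lambda)=W_{237}$ containing $W_{236}$. Everything else is bookkeeping with the transvection formulas and the standard fact that the $E_8$ roots generate the lattice.
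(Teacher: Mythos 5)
Your proof is correct, and it shares with the paper the key computation that $W_{236}$ contains every translation $T_\lambda$ (same mechanism: the product of the reflections in $(\alpha;1,0)$ and $(\alpha;0,0)$ is $T_{\pm\alpha}$, then conjugate by $W_{235}$ and use that the roots span $\Lambda_0$). Where you genuinely diverge is in proving that nothing is missing from $W_{236}$. You filter $\Orth(\Lambda)_\rho$ by its action on $\rho^\perp/\gend{\rho}\iso\Lambda_0$, compute the kernel explicitly (this is where you invoke unimodularity of $E_8$ to represent the functional $c$ by a lattice vector, and the evenness of $E_8$ to get $b=-\mu^2/2\in\Z$), and exhibit a section of $\pi$; this gives the structure $\Lambda_0:W_{235}$ by construction and reduces the lemma to checking generators. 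The paper instead runs an orbit--stabilizer argument: the translations act simply transitively on the null vectors having inner product $1$ with $\rho$, and the simultaneous stabilizer of $\rho$ and $(0;0,1)$ is $\Orth(\Lambda_0)=W_{235}$, so $W_{236}$ and $\Oup(\Lambda)_\rho$ act transitively on the same set with the same point stabilizer and hence coincide. The two arguments are close cousins (your exact sequence is the algebraic shadow of the paper's transitive action), but the paper's version is the one it deliberately templates for the later cusp-counting proofs, while yours makes the semidirect-product structure and the role of unimodularity more explicit. Your closing observation that fixing the nonzero null vector $\rho$ forces preservation of the future cone, so $\Orth(\Lambda)_\rho=\Oup(\Lambda)_\rho$, is a point the paper leaves implicit and is worth stating.
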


\begin{proof}
The $T_\lambda$ are called translations because of how they act on
hyperbolic space when $\rho$ is placed at infinity in the upper
halfspace model.  One checks that they are isometries, that
$T_{\lambda+\mu}=T_\lambda T_\mu$, and that $W_{235}=\Aut\Lambda_0$
acts on them in the same way it acts on $\Lambda_0$.
Next, $W_{236}$ contains the reflection in
$\lambda=(\frac-2\frac-2\frac-2\frac-2\frac-2\frac-2\frac-2\frac+2;00)$,
because this is a root of $\Lambda_0$.  Also, $W_{236}$ contains the
reflection in
$(\frac-2\frac-2\frac-2\frac-2\frac-2\frac-2\frac-2\frac+2;10)$,
because this root is second from the bottom in figure~\ref{fig-simple-roots-for-E10}.  The product of
these two reflections is $T_{\pm\lambda}$, the sign depending on the
order of the factors.  So $W_{236}$ contains the translation by a
root of $\Lambda_0$.  Conjugating by $W_{235}$ shows that $W_{236}$
contains the translations by all the roots of $\Lambda_0$.  Since  $\Lambda_0$ is spanned by its roots, 
$W_{236}$ contains all translations.

The translations act transitively on
$\set{(x;-x^2/2,1)}{x\in\Lambda_0}$, which is the set of null vectors
having inner product~$1$ with $\rho$.
The simultaneous stabilizer of
$\rho$ and  $(0;0,1)$ is the orthogonal group of $\Lambda_0$,
which is $W_{235}\sset W_{236}$.  Since 
$\Oup(\Lambda)_\rho$  and its subgroup $W_{236}$
act transitively
on the same set, with the same stabilizer, they are the same group.
\end{proof}

The proof of the following theorem of Vinberg illustrates the
technique of cusp-counting, which we will use several times.  To avoid
repetition we take ``null vector'' to mean a future-directed
primitive lattice vector of norm~$0$.  ``Cusp counting'' means: when a
Coxeter group acts on an integer quadratic form of signature $(1,n)$
and has
finite volume fundamental chamber in hyperbolic space, then its orbits on
null vectors are in bijection with the ideal vertices of the chamber.
And these in turn are in bijection with the maximal affine subdiagrams
of the Coxeter diagram.

\begin{theorem}[Vinberg \cite{Vinberg-E10}]
\label{thm-Vinberg-237}
  $\Oup(\Lambda)=W_{237}$%
.
\end{theorem}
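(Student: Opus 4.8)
The plan is to reuse the transitivity-plus-stabilizer argument from the proof of Lemma~\ref{lem-null-vector-stabilizer-237-case}. Since the reflection in a root preserves both $\Lambda$ and the future cone, every generator of $W_{237}$ lies in $\Oup(\Lambda)$, so $W_{237}\sset\Oup(\Lambda)$ and it suffices to prove the reverse inclusion. I will show that $W_{237}$ already acts transitively on null vectors, and that the $\Oup(\Lambda)$-stabilizer of the single null vector $\rho=(0;1,0)$ is contained in $W_{237}$; an orbit argument then forces $\Oup(\Lambda)=W_{237}$.

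The transitivity is exactly what cusp counting delivers. The fundamental chamber of $W_{237}$ is the Coxeter simplex $P\sset H^9$ cut out by the $10$ simple roots, whose diagram \eqref{eq-E10-diagram} (the $E_{10}$, or $T_{2,3,7}$, diagram) is a branch node with arms carrying $1$, $2$ and $6$ further nodes. First I would confirm that $P$ has finite volume by Vinberg's criterion: a Coxeter simplex in $H^9$ has finite volume exactly when deleting any single node leaves a diagram of spherical or affine type. Running through the ten single-node deletions, nine give spherical diagrams (for instance $A_9$ and $D_9$ at the two short arms, and $E_8+A_1$, $E_7+A_2$, $E_6+A_3$ along the long arm), while deleting the outer node of the long arm leaves $\Etilde_8$ (arms $1,2,5$), the unique affine diagram among the ten. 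Hence $P$ is a finite-volume simplex with exactly one ideal vertex, and cusp counting then shows that $W_{237}$ has a single orbit on null vectors.

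To finish, recall from Lemma~\ref{lem-null-vector-stabilizer-237-case} that $\Orth(\Lambda)_\rho=W_{236}$, and note $W_{236}\sset W_{237}$. Given any $g\in\Oup(\Lambda)$, the image $g\rho$ is again a null vector, so by transitivity there is $w\in W_{237}$ with $wg\,\rho=\rho$; then $wg\in\Oup(\Lambda)_\rho\sset\Orth(\Lambda)_\rho=W_{236}\sset W_{237}$, whence $g=w^{-1}(wg)\in W_{237}$. This gives $\Oup(\Lambda)\sset W_{237}$ and hence equality.

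The substantive step is the finite-volume verification, i.e.\ confirming that the $E_{10}$ diagram really defines a finite-covolume hyperbolic simplex with a single cusp; this is where the lone affine subdiagram $\Etilde_8$ does all the work, and it is essential that no deletion produces an indefinite diagram, which would destroy finite volume. Everything else—the inclusion $W_{237}\sset\Oup(\Lambda)$, the containment $W_{236}\sset W_{237}$, and the final orbit argument—is bookkeeping once Lemma~\ref{lem-null-vector-stabilizer-237-case} and the cusp-counting principle are in hand.
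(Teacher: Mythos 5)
Your proof is correct and follows essentially the same route as the paper: cusp-counting on the $E_{10}$ Coxeter simplex (finite volume, single ideal vertex from the unique affine subdiagram $\Etilde_8$) gives transitivity of $W_{237}$ on null vectors, and combining this with Lemma~\ref{lem-null-vector-stabilizer-237-case} ($W_{236}=\Orth(\Lambda)_\rho\sset W_{237}$) yields $\Oup(\Lambda)=W_{237}$. The only difference is that you spell out the single-node-deletion verification of finite volume, which the paper leaves to ``the general theory of hyperbolic reflection groups''; your list of subdiagrams is accurate.
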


\begin{proof}
The image in hyperbolic $9$-space of the fundamental chamber has
finite volume, with all vertices in $H^9$ except for one on its
boundary.  This follows from the general theory of hyperbolic
reflection groups: the vertices in $H^9$ correspond to the rank~$9$
spherical subdiagrams of figure~\ref{fig-simple-roots-for-E10}, and the last vertex
corresponds to the affine  subdiagram $\Etilde_8$.  It follows that there is
only one $W_{237}$-orbit of null vectors, i.e., $W_{237}$ acts
transitively on them.  Since $W_{237}$ contains the full
$\Oup(\Lambda)$-stabilizer of one of them (lemma~\ref{lem-null-vector-stabilizer-237-case}), it is all of
$\Oup(\Lambda)$.
\end{proof}

The most important ingredient in the proof of
theorem~\ref{thm-unnodal-case} is the construction of automorphisms of
$X$, for which we refer to the proof of theorem~$3$ in
\cite[\S6]{Dolgachev-introduction-to-Enriques-surfaces}.  $\Lambda$
has many direct sum decompositions as a copy of $E_8$ plus a copy
of~$U$.  For every such decomposition, the transformation which
negates the $E_8$ summand is called a Bertini involution, and arises
from an automorphism of $X$.
(Very briefly: consider the linear system $|2E_1+2E_2|$, where $E_1$
and $E_2$ are the effective classes corresponding to the null vectors
in the $U$ summand.  This is a $2$-to-$1$ map onto a $4$-nodal quartic
del Pezzo surface in $\P^4$, and the Bertini involution is the deck
transformation of this covering.)
Bertini involutions obviously lie in the
level~$2$ congruence subgroup of $\Oup(\Lambda)$, hence in
$W_{237}(2)$.  Also, every conjugate of a Bertini involution is again
an Bertini involution.  So the group they generate is normal in
$\Orth(\Lambda)$.

\begin{proof}[Proof of theorem~\ref{thm-unnodal-case}]
The proof amounts to showing that the Bertini involutions generate
$W_{237}(2)$.  We write $S$ (``small'') for the group they generate,
and think of $\Oup(\Lambda)$ as the ``large'' group.  To understand
the relation between small and large, we will introduce a ``medium''
group $M$.  Its relationships with $S$ and $\Oup(\Lambda)$ are easy to
work out. Then the relationship between $S$ and $\Oup(\Lambda)$ will
be visible.

We define $M$ as the group generated by $S$ and $W_{236}$.  The
central involution $B$ of $W_{235}\sset M$ is a Bertini involution.
Also, its conjugacy action on $\Lambda_0\sset W_{236}$ is inversion.
Being normal, $S$ contains
$$
T_\lambda B T_{\lambda}^{-1}\circ B^{-1}
=
T_\lambda \circ B T_{\lambda}^{-1} B^{-1}
=
T_\lambda\circ T_{-\lambda}^{-1}
=
T_{2\lambda}
$$
for all $\lambda\in\Lambda_0$.  It follows that
$M/S$ is a quotient of $W_{236}/\gend{B,{\rm
    all\ }T_{2\lambda}}=2^8:\Orth_8^+(2):2$.
On the other hand, it is easy to see that $W_{236}$ acts on $V$ as the full
$\Orth(V)$-stabilizer of $\rhobar$, which has structure
$2^8:\Orth_8^+(2):2$.  (Repeat the proof of lemma~\ref{lem-null-vector-stabilizer-237-case}, reduced mod~$2$.)
We have shown that $S$  has index${}\leq|2^8:\Orth_8^+(2)\cdot2|$ in $M$, and lies in the
kernel of the surjection
$M\to \Orth(V)_\rhobar\iso 2^8:\Orth_8^+(2)\cdot2$.
So $S$ coincides with
the kernel.  That is, $M\to\Orth(V)$ induces an
isomorphism $M/S\to\Orth(V)_\rhobar$.

The advantage of working with $M$ rather than $S$ is that 
it contains the  Coxeter group $M_0$ whose simple roots are shown in
figure~\ref{fig-simple-roots-for-subgroup-no-nodal-case}.  We will
see later that in fact $M_0$ is all of $M$; for now we just prove
$M_0\sset M$.  First,
$M$ contains
$W_{236}$ by definition.  To see that $M$ contains the reflection in
the last root (the lower right one), note that
$r=(\frac-2\frac-2\frac-2\frac-2\frac-2\frac-2\frac-2\frac+2;0,0)$ is a
root of $\Lambda_0$, so its reflection lies in $W_{235}$.  Choose an
element $\lambda$ of $\Lambda_0$ having inner product~$-1$
with it.  Then $T_{2\lambda}\in S$ sends $r$ to 
$(\frac-2\frac-2\frac-2\frac-2\frac-2\frac-2\frac-2\frac+2;2,0)$.
Now consider the conjugate of $T_{2\lambda}$ by the isometry of
$\Lambda$ which exchanges the last two coordinates.  This lies in $S$
by normality, and  sends $r$ to 
$(\frac-2\frac-2\frac-2\frac-2\frac-2\frac-2\frac-2\frac+2;0,2)$.
Therefore $M$ contains the reflection in this root.
This finishes the proof
that $M$ contains $M_0$.

\begin{figure}
\begin{tikzpicture}[line width=\mywidth, scale=.8]
\draw (0,1) -- (0,8);
\draw (0,6) -- (1,6);
\draw (0,2) -- (1,2);
\vertex (1,6);
\vertex (0,2);
\vertex (0,3);
\vertex (0,4);
\vertex (0,5);
\vertex (0,6);
\vertex (0,7);
\vertex (0,8);
\vertex (0,1);
\vertex (1,2);
\draw (1,6) node {\kern15pt\rlap{$\textstyle(\frac-2\frac-2\frac-2\frac+2\frac+2\frac+2\frac+2\frac+2;00)$}};
\draw (0,2) node {\llap{$(000000{+}{-};00)$}\kern15pt};
\draw (0,3) node {\llap{$(00000{+}{-}0;00)$}\kern15pt};
\draw (0,4) node {\llap{$(0000{+}{-}00;00)$}\kern15pt};
\draw (0,5) node {\llap{$(000{+}{-}000;00)$}\kern15pt};
\draw (0,6) node {\llap{$(00{+}{-}0000;00)$}\kern15pt};
\draw (0,7) node {\llap{$(0{+}{-}00000;00)$}\kern15pt};
\draw (0,8) node {\llap{$({+}{-}000000;00)$}\kern15pt};
\draw (0,1) node {\llap{$\textstyle(\frac-2\frac-2\frac-2\frac-2\frac-2\frac-2\frac-2\frac+2;10)$}\kern15pt};
\draw (1,2) node {\kern15pt\rlap{$(\frac-2\frac-2\frac-2\frac-2\frac-2\frac-2\frac-2\frac+2;02)$}};
\end{tikzpicture}%
\caption{Simple roots for  $M=\Oup(\Lambda)_\rhobar$; see the proof of theorem~\ref{thm-Vinberg-237}.}
\label{fig-simple-roots-for-subgroup-no-nodal-case}
\end{figure}

Next we claim that $M_0$ is all of $\Oup(\Lambda)_{\rhobar}$.  The
fact that $\Oup(\Lambda)_\rhobar$ contains $M_0$ (even $M$) is
obvious.  Now observe that $M_0$'s chamber has finite volume, with~$3$
cusps, corresponding to the $\Dtilde_8$ subdiagram and two
$\Etilde_8$ subdiagrams.  Therefore $M_0$ has~$3$ orbits on null
vectors.  On the other hand, $\Oup(\Lambda)_\rhobar$ has at least~$3$ orbits on null vectors, since it has three orbits on
isotropic vectors in $V$.  (Namely: $\rhobar$ itself, the other isotropic
vectors orthogonal to $\rhobar$, and the
isotropic vectors not orthogonal to $\rhobar$.)  So 
$\Oup(\Lambda)_\rhobar$
and its subgroup $M_0$
have the same orbits on null vectors. The stabilizer of $\rho$ in
either of them is $W_{236}$,
proving $M_0=\Oup(\Lambda)_\rhobar$.  Since $M_0\sset
M\sset\Oup(\Lambda)_\rhobar$, this also shows the equality of $M$ with
these groups.

Finally, we have
\begin{align*}
[\Oup(\Lambda):S]&{}=[\Oup(\Lambda):M][M:S]
\\
&{}=\bigl[\Oup(\Lambda):\Oup(\Lambda)_\rhobar\bigr]\,\bigl|\Orth(V)_\rhobar\bigr|
\\
&{}=\bigl[\Orth(V):\Orth(V)_\rhobar\bigr]\,\bigl|\Orth(V)_\rhobar\bigr|
\\
&{}=\bigl|\Orth(V)\bigr|
\end{align*}
Since $S$ lies in the kernel of the surjection
$\Oup(\Lambda)\to\Orth(V)$, it must be the whole kernel, finishing the proof.
\end{proof}

\section{Preparation for the $1$-nodality-class case}
\label{sec-preparation-for-1-nodal}

\noindent
This section can be summarized as ``the same as section~\ref{sec-no-nodal} with
$E_7\oplus U$ in place of $E_8\oplus U$''.  To tighten the analogy it
is necessary to use the ``even'' coordinate system for the $E_8$
lattice in place of the ``odd'' one we used in the previous section.
So now we take the $E_8$ lattice to consist of the vectors
$(x_1,\dots,x_8)$ with even coordinate sum and either all entries in
$\Z$ or all in $\Z+\frac12$.  See \cite[\S8.1 of
  Ch.\ 4]{Conway-Sloane}; these coordinates differ from those of
section~\ref{sec-no-nodal} by
negating any coordinate.  We take $\Lambda$ to consist of the vectors
$(x_1,\dots,x_8;y,z)$ with $(x_1,\dots,x_8)$ in the $E_8$ lattice and
$y,z\in\Z$.  The norm is still $-x_1^2-\cdots-x_8^2+2y
z$.  Mimicking our notation from section~\ref{sec-no-nodal}, we write $\Lambda_0$
for the sublattice $\{(x_1,\dots,x_8;0,0)\}$ of $\Lambda$.

We write $\nu$ for the root
$(\frac+2\frac+2\frac+2\frac+2\frac+2\frac+2\frac+2\frac+2;00)$ of
$\Lambda_0$.
It stands for ``nodal root'', although for this section it is just a root.
Its orthogonal complement in $\Lambda_0$ is a copy of
the $E_7$ root lattice, and its full orthogonal complement $\nu^\perp$
in $\Lambda$ is $E_7\oplus U$.  It is easy to see that $\nu^\perp$ is
spanned by the roots in figure~\ref{fig-simple-roots-245}, and that their inner products
are indicated in the usual way by the edges of the diagram.  In
particular they form a set of simple roots for the Coxeter group
$W_{245}$ generated by their reflections.  We also write $W_{244}$
for the subgroup generated by the reflections in the top~$8$ roots,
and regard both these groups as acting on all of $\Lambda$, not just
$\nu^\perp=E_7\oplus U$.  The next
two results are proven the same way lemma~\ref{lem-null-vector-stabilizer-237-case} and theorem~\ref{thm-Vinberg-237}
were.

\begin{figure}
\begin{tikzpicture}[line width=\mywidth, scale=.8]
\draw (0,-1) -- (0,6);
\draw (0,3) -- (1,3);
\vertex (0,-1);
\vertex (0,0);
\vertex (0,1);
\vertex (0,2);
\vertex (0,3);
\vertex (0,4);
\vertex (0,5);
\vertex (0,6);
\vertex (1,3);
\draw (0,6) node {\llap{$({+}{-}000000;0,0)$}\kern15pt};
\draw (0,5) node {\llap{$(0{+}{-}00000;0,0)$}\kern15pt};
\draw (0,4) node {\llap{$(00{+}{-}0000;0,0)$}\kern15pt};
\draw (0,3) node {\llap{$(000{+}{-}000;0,0)$}\kern15pt};
\draw (0,2) node {\llap{$(0000{+}{-}00;0,0)$}\kern15pt};
\draw (0,1) node {\llap{$(00000{+}{-}0;0,0)$}\kern15pt};
\draw (0,0) node {\llap{$(000000{+}{-};1,0)$}\kern15pt};
\draw (0,-1) node {\llap{$(00000000;-1,1)$}\kern15pt};
\draw (1,3) node {\kern15pt\rlap{$(\frac-2\frac-2\frac-2\frac-2\frac+2\frac+2\frac+2\frac+2;0,0)$}};
\end{tikzpicture}%
\caption{Simple roots for $\Oup(\Lambda)_\nu$, 
where~$\nu$ is the root
$(\frac+2\frac+2\frac+2\frac+2\frac+2\frac+2\frac+2\frac+2;00)$ of~$\Lambda$;
see theorem~\ref{thm-Vinberg-245}.}
\label{fig-simple-roots-245}
\end{figure}

\begin{lemma}
\label{lem-null-vector-stabilizer-244}
$W_{244}$ is the full stabilizer of the null vector $\rho=(0;1,0)$
in $\Orth(\Lambda)_\nu$.
\qed
\end{lemma}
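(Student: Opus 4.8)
The plan is to copy the proof of lemma~\ref{lem-null-vector-stabilizer-237-case} almost verbatim, replacing the decomposition $E_8\oplus U$ by $\nu^\perp=E_7\oplus U$ and carrying out everything inside $G:=\Orth(\Lambda)_\nu$, which preserves $\nu^\perp$. Write $E_7:=\nu^\perp\cap\Lambda_0$ for the $E_7$ root lattice; it is spanned by its roots. First I would reuse the translations $T_\lambda$ of \eqref{eq-translations-in-E9}, now restricted to $\lambda\in E_7$. From \eqref{eq-translations-in-E9} one computes $T_\lambda(\nu)=\nu-(\lambda\cdot\nu)\rho$, so $T_\lambda$ fixes $\nu$ exactly when $\lambda\perp\nu$, i.e.\ when $\lambda\in E_7$; these are precisely the translations lying in $G$, and all of them fix $\rho=(0;1,0)$. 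The relation $T_{\lambda+\mu}=T_\lambda T_\mu$ and the conjugation rule $gT_\lambda g^{-1}=T_{g\lambda}$ for $g$ in the $E_7$ Weyl group are inherited from the previous section.

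Next I would show that $W_{244}$ contains every $T_\lambda$. Among the eight roots generating $W_{244}$ (the topmost eight in figure~\ref{fig-simple-roots-245}), seven have vanishing $U$-component and are genuine roots of $E_7=\nu^\perp\cap\Lambda_0$; they form a finite $E_7$ diagram, so the parabolic subgroup $W(E_7)\sset W_{244}$ they generate is the full Weyl group of the $E_7$ root system. In particular $W_{244}$ contains the reflection in $(000000{+}{-};0,0)$. It also contains the reflection in the eighth (affine) simple root $(000000{+}{-};1,0)$, and since these two roots differ by $\rho$ their mirrors are parallel, so the product of their reflections is the translation $T_{\pm(000000{+}{-})}$ by a root of $E_7$. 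Conjugating by $W(E_7)$ produces the translations by all roots of $E_7$, and because $E_7$ is spanned by its roots, $W_{244}$ contains every $T_\lambda$.

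Finally I would run the transitivity-plus-stabilizer argument. The translations act transitively on $\set{(x;-x^2/2,1)}{x\in E_7}$, which is exactly the set of null vectors lying in $\nu^\perp$ and having inner product~$1$ with $\rho$; this set is preserved by $G_\rho$. The simultaneous $G$-stabilizer of $\rho$ and $(0;0,1)$ is the subgroup of $\Aut\Lambda_0$ fixing the root $\nu$; since $\Aut\Lambda_0=W(E_8)$ acts transitively on its $240$ roots, this stabilizer is $W(E_8)_\nu=W(E_7)$, which lies in $W_{244}$. Thus $G_\rho$ and its subgroup $W_{244}$ act transitively on the same set with the same point stabilizer $W(E_7)$, forcing $G_\rho=W_{244}$, as claimed.

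I expect no serious obstacle, since the argument is structurally identical to lemma~\ref{lem-null-vector-stabilizer-237-case}; the only genuinely new bookkeeping is the role of $\nu$—verifying that the surviving translations are exactly those indexed by $E_7=\nu^\perp\cap\Lambda_0$, and that the two–null-vector stabilizer contracts from $W(E_8)$ down to precisely $W(E_7)$ rather than something larger. Both reduce to the single standard fact that the stabilizer of a root in $W(E_8)$ is the $E_7$ Weyl group, a one-line order computation; once that is in hand the remainder is verbatim the earlier proof.
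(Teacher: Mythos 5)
Your proposal is correct and is exactly the argument the paper intends: the paper omits the proof, stating only that Lemma~\ref{lem-null-vector-stabilizer-244} "is proven the same way lemma~\ref{lem-null-vector-stabilizer-237-case}" was, i.e.\ by replacing $E_8\oplus U$ with $\nu^\perp=E_7\oplus U$. Your bookkeeping --- the surviving translations are those indexed by $E_7=\nu^\perp\cap\Lambda_0$, the relevant orbit is the null vectors in $\nu^\perp$ with $\rho$-inner-product $1$, and the two-point stabilizer is $W(E_8)_\nu=W(E_7)$ --- all checks out.
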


\begin{theorem}[Vinberg]
\label{thm-Vinberg-245}
$W_{245}$ is all of $\Oup(\Lambda)_\nu$.
\qed
\end{theorem}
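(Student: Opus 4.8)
The plan is to repeat the proof of theorem~\ref{thm-Vinberg-237}, with $E_7\oplus U$ in place of $E_8\oplus U$, using the same cusp-counting principle. Here $\nu^\perp=E_7\oplus U$ has signature $(1,8)$, so $W_{245}$ acts on hyperbolic $8$-space $H^8$, and figure~\ref{fig-simple-roots-245} has $9=\dim H^8+1$ nodes, so its fundamental chamber is a simplex. First I would invoke the general theory of hyperbolic reflection groups to see that this simplex has finite volume: its finite vertices correspond to the rank-$8$ spherical subdiagrams obtained by deleting a single node, and its ideal vertices to the affine ones, so finiteness of volume amounts to the statement that deleting any one node leaves a subdiagram that is spherical or affine.

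The crux is the resulting count of ideal vertices. The diagram of figure~\ref{fig-simple-roots-245} is the tree $T_{2,4,5}$: a path of $8$ nodes with one extra node attached to the fifth node from the bottom, so that the branch node has arms of lengths $1$, $3$ and $4$. Deleting the bottom node shortens the longest arm and leaves $T_{2,4,4}=\Etilde_7$. This is the only affine subdiagram: the other eight deletions leave, respectively, $E_8$, $A_1+D_7$, $A_2+A_6$, $A_1+A_3+A_4$, $A_3+A_5$, $A_2+D_6$, $A_1+E_7$, and $A_8$, all spherical. Hence the fundamental chamber has finite volume, with all of its vertices in $H^8$ except for the single cusp coming from $\Etilde_7$. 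By the cusp-counting principle, $W_{245}$ therefore acts transitively on the null vectors of $\nu^\perp$, exactly as $W_{237}$ did in theorem~\ref{thm-Vinberg-237}.

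It remains to run the orbit--stabilizer argument. By lemma~\ref{lem-null-vector-stabilizer-244}, the full stabilizer of $\rho=(0;1,0)$ in $\Orth(\Lambda)_\nu$ is $W_{244}$, which lies in $W_{245}$. Given $g\in\Oup(\Lambda)_\nu$, the vector $g\rho$ is again a (primitive, future-directed) null vector lying in $\nu^\perp$, since $g$ fixes $\nu$; so transitivity furnishes $w\in W_{245}$ with $wg\rho=\rho$. Then $wg$ stabilizes $\rho$ and lies in $\Orth(\Lambda)_\nu$, whence $wg\in W_{244}\sset W_{245}$ and $g\in W_{245}$. As the reverse inclusion is clear, $\Oup(\Lambda)_\nu=W_{245}$. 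I expect the only genuine work to be the finite-volume verification and cusp count of the middle paragraph; the rest is formal and word-for-word parallel to the $E_8\oplus U$ case.
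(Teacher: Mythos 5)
Your proof is correct and is exactly the argument the paper intends: it states the result with a \qed precisely because it is ``proven the same way'' as Theorem~\ref{thm-Vinberg-237}, namely by checking that the $T_{2,4,5}$ simplex in $H^8$ has a unique affine subdiagram ($\Etilde_7$, from deleting the end of the long arm), hence one orbit of null vectors, and then invoking Lemma~\ref{lem-null-vector-stabilizer-244} for the stabilizer. Your enumeration of the nine one-node deletions (eight spherical, one affine) is accurate, so there is nothing to add.
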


Bertini involutions: from the presence of an $E_8$ diagram in
figure~\ref{fig-simple-roots-245} we see that $\nu^\perp$ has sublattices isomorphic to
$E_8$.  Every $E_8$ sublattice is unimodular, hence a direct summand
of $\Lambda$, so the involution that negates the $E_8$ summand is an
isometry.  Since this summand was chosen in $\nu^\perp$, we obtain an
element of $\Oup(\Lambda)_\nu$.  These are called Bertini involutions,
and act trivially on $V$.

Kantor involutions: by construction, $\nu^\perp$ has direct sum
decompositions $E_7\oplus U$.  For any such decomposition, the central
involution in $W(E_7)$ acts on $\Lambda$ by negation on the $E_7$ summand and
trivially on the $U$ summand.  These are called Kantor involutions.
Every one acts on $V$ by the transvection in $\nubar$.  (Proof: the
complement in $\Lambda$ of the $U$ summand is a copy of $E_8$
containing $\spanof{\nu}\oplus E_7$.  The Kantor involution is the product of the
negation map of this $E_8$, which acts trivially on $V$, with the
reflection in $\nu$.)   

\begin{theorem}
\label{thm-Kantor-and-Bertini-inside-245}
  The Kantor and Bertini involutions generate the subgroup
$\Oup(\Lambda)_{\nu,\nubar^\perp}$
  of
$\Oup(\Lambda)$ that fixes $\nu$ and acts trivially on $\nubar^\perp\sset V$.
\end{theorem}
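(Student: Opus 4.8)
The plan is to run the argument of theorem~\ref{thm-unnodal-case} inside $\Oup(\Lambda)_\nu=W_{245}$ (theorem~\ref{thm-Vinberg-245}), with $E_7\oplus U$ replacing $E_8\oplus U$ throughout. Write $S$ for the group generated by the Kantor and Bertini involutions, write $T:=\Oup(\Lambda)_{\nu,\nubar^\perp}$ for the target group, and let $t_\nubar\colon\xbar\mapsto\xbar+(\xbar\cdot\nubar)\nubar$ be the transvection in $\nubar$. Bertini involutions act trivially on $V$ and Kantor involutions act as $t_\nubar$; since $t_\nubar$ fixes $\nubar^\perp$ pointwise, both kinds of generator fix $\nu$ and act trivially on $\nubar^\perp$, so $S\sset T$ and only $T\sset S$ remains. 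As conjugation by $\Oup(\Lambda)_\nu$ permutes the $E_8$ sublattices of $\nu^\perp$ and the splittings $\nu^\perp=E_7\oplus U$, it permutes the Bertini and Kantor involutions; hence $S$ is normal in $\Oup(\Lambda)_\nu$, and $M:=S\cdot W_{244}$ is a subgroup with $M/S\iso W_{244}/(W_{244}\cap S)$.

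First I would identify $M$ with the stabilizer $\Oup(\Lambda)_{\nu,\rhobar}$, in exact analogy with the role of figure~\ref{fig-simple-roots-for-subgroup-no-nodal-case} in theorem~\ref{thm-unnodal-case}. The central involution $\kappa$ of $W(E_7)\sset W_{244}$ is the Kantor involution for the splitting $\nu^\perp=(\nu^\perp\cap\Lambda_0)\oplus\gend{\rho,(0;0,1)}$ and inverts the translation lattice $\nu^\perp\cap\Lambda_0\iso E_7$, so $T_\lambda\kappa T_\lambda^{-1}\kappa^{-1}=T_{2\lambda}\in S$ for every $\lambda\in\nu^\perp\cap\Lambda_0$. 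Using these $T_{2\lambda}$ and the coordinate-swap trick from the proof of theorem~\ref{thm-unnodal-case}, $M$ is seen to contain the Coxeter group $M_0$ whose diagram is obtained from figure~\ref{fig-simple-roots-245} by deleting its bottom (cusp) node and adjoining one further root, just as figure~\ref{fig-simple-roots-for-subgroup-no-nodal-case} arises from figure~\ref{fig-simple-roots-for-E10}. A cusp count for the finite-volume chamber of $M_0$ in the hyperbolic $8$-space of $\nu^\perp$, read off from its maximal affine subdiagrams, is then matched against the orbits of $\Orth(V)_{\nubar,\rhobar}$ on the isotropic vectors of $V$: since $M_0\sset M\sset\Oup(\Lambda)_{\nu,\rhobar}$, all three share $W_{244}$ as the stabilizer of $\rho$ (lemma~\ref{lem-null-vector-stabilizer-244}) and, once the orbit counts agree, have the same orbits on null vectors, forcing $M=\Oup(\Lambda)_{\nu,\rhobar}$.

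The remainder is index bookkeeping. Because $\rho\cdot\nu=0$ we have $\rhobar\in\nubar^\perp$, so every element of $T$ fixes $\rhobar$; thus $T\sset\Oup(\Lambda)_{\nu,\rhobar}=M$, and $T$ is precisely the preimage in $M$ of $\gend{t_\nubar}$ under $M\to\Orth(V)$. Hence $[M:T]=|\mathrm{Im}(M\to\Orth(V))|/2=|\mathrm{Im}(W_{244}\to\Orth(V))|/2$, the images of $S$ and of $W_{244}$ both lying in $\mathrm{Im}(W_{244})$. Reducing lemma~\ref{lem-null-vector-stabilizer-244} modulo~$2$ (as in theorem~\ref{thm-unnodal-case}) shows $W_{244}$ maps onto the full $\Orth(V)_\nubar$-stabilizer of $\rhobar$, a group of order $2^8\,|\Sp_6(2)|$; so $[M:T]=2^7\,|\Sp_6(2)|$. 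On the other hand $W_{244}\cap S$ contains $\kappa$ and all $T_{2\lambda}$, so $M/S$ is a quotient of $W_{244}/\gend{\kappa,{\rm\ all\ }T_{2\lambda}}=(E_7/2E_7):(W(E_7)/\gend{-1})=2^7:\Sp_6(2)$, whence $[M:S]\le2^7\,|\Sp_6(2)|=[M:T]$. Combined with $S\sset T\sset M$, this yields $[M:S]=[M:T]$ and therefore $S=T$.

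The main obstacle is the identification $M=\Oup(\Lambda)_{\nu,\rhobar}$ of the second paragraph: writing down the correct nine-node diagram for $M_0$ and carrying out the cusp count, i.e.\ checking that its maximal affine subdiagrams are in bijection with the $\Orth(V)_{\nubar,\rhobar}$-orbits of isotropic vectors. This is the $E_7\oplus U$ analogue of the most delicate step in theorem~\ref{thm-unnodal-case}. Everything else is routine transcription of that proof, together with the finite-group facts $W(E_7)/\gend{-1}\iso\Sp_6(2)$ and $|\Orth(V)_{\nubar,\rhobar}|=2^8\,|\Sp_6(2)|$, both of which are standard order computations.
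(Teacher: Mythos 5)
Your overall strategy is exactly the paper's: the small/medium/large scheme with $S$ generated by the involutions, $M=\gend{S,W_{244}}$, the commutator trick producing $T_{2\lambda}$, identification of $M$ with $\Oup(\Lambda)_{\nu,\rhobar}$ via a Coxeter subgroup $M_0$ and a cusp count, and then index bookkeeping. Your closing computation is a mild rearrangement of the paper's (you compare $[M:S]$ with $[M:T]$ inside $M$ after observing $T\sset\Oup(\Lambda)_{\nu,\rhobar}$, rather than computing $[\Oup(\Lambda)_\nu:S]=|\Orth(\nubar^\perp)|$), and that part is fine.

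The genuine gap is at the step you yourself flag as the crux: your description of $M_0$ is wrong, and the error is not cosmetic. You propose a \emph{nine}-node diagram, namely the eight simple roots of $W_{244}$ together with the one root $e_9=(000000{+}{-};0,2)$ produced by the $T_{2\lambda}$/coordinate-swap trick. But these nine roots span the rank-$9$ lattice $\nu^\perp$ of signature $(1,8)$, so they cut out a $9$-facet simplex in $H^8$, and that simplex has \emph{infinite} volume: deleting the top node $e_1$ leaves an $8$-node subdiagram with two branch vertices ($e_4$ and $e_6$), one of whose legs has length $2$, so it is neither spherical nor affine. Hence the group generated by these nine reflections has infinite covolume, is of infinite index in $\Oup(\nu^\perp)$, and cannot equal $\Oup(\Lambda)_{\nu,\rhobar}$; no cusp count can rescue this. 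The correct $M_0$ (the paper's figure~\ref{fig-simple-roots-subgroup-of-245}) has \emph{ten} simple roots: one must adjoin a further root $e_{10}$, and its reflection is obtained as the product of a \emph{Bertini} involution (of the $E_8$ saturating the $A_1\oplus E_7$ spanned by $e_{10},e_2,\dots,e_8$) with the central involution of the $W(E_7)\sset W_{244}$ generated by the reflections in $e_2,\dots,e_8$. This is precisely where the Bertini involutions enter the proof; in your plan they are never actually used to build $M_0$ (only the Kantor-derived elements are), which is a symptom of the missing root. With $e_{10}$ included, the chamber has finite volume with the three cusps $\Dtilde_6\Atilde_1$, $\Etilde_7$, $\Etilde_7$, and the rest of your argument goes through as in the paper.
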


\begin{proof}
We reuse our strategy from theorem~\ref{thm-unnodal-case}.  That is, we write $S$ for
the subgroup of $\Oup(\Lambda)_\nu$ generated by the Kantor and
Bertini involutions, and think of it as ``small''.  We think of
$\Oup(\Lambda)_\nu$ as ``large''.  Obviously $S$ is normal in $\Orth(\Lambda)_\nu$.  To
relate these groups we define the ``medium'' group $M$ to be generated
by $S$ and $W_{244}$.

Recall that $W_{244}$ has structure
$E_7:W(E_7)=E_7:(2\times\Orth_7(2))$ where the initial $E_7$ indicates the root lattice regarded
as a group. The central involution in $2\times\Orth_7(2)$ is a Kantor
involution.  Mimicking the proof of theorem~\ref{thm-unnodal-case} shows that
$M/(S\cap M)$ is a quotient of $2^7:\Orth_7(2)$.  Continuing the mimicry,
the image of $M$ in $\Orth(V)$ has structure $2^7:(2\times\Orth_7(2))$,
which is the simultaneous stabilizer $\Orth(V)_{\nubar,\rhobar}$.  (Note:
$2^7$ and $2\times\Orth_7(2)$ are subgroups of $2^8$ and
$\Orth_8^+(2):2$ from the proof of theorem~\ref{thm-unnodal-case}.
The $2^7$ is
the subgroup of $\Orth(V)_\nubar$ that fixes $\rhobar$ and acts trivially
on $\rhobar^\perp/\gend{\rhobar}$,  and $2\times\Orth_7(2)$ acts faithfully on
$\rhobar^\perp/\gend{\rhobar}$.)  Every Kantor involution acts trivially
on $\nubar^\perp$, and the image of $M$ in $\Orth(\nubar^\perp)\iso\Orth_9(2)$ has
structure $2^7:\Orth_7(2)$.  It follows that $S$ is the kernel of the
action of $M$ on $\nubar^\perp\sset V$.  So we may identify $M/S$ with
the stabilizer of $\rhobar$ in $\Orth(\nubar^\perp)$.

\begin{figure}
\begin{tikzpicture}[line width=\mywidth, scale=.8]
\draw[line width=\heavywidth](0,6) -- (0,7);
\draw (0,0) -- (0,6);
\draw (0,3) -- (1,3);
\draw (0,1) -- (1,1);
\vertex (0,0);
\vertex (0,1);
\vertex (0,2);
\vertex (0,3);
\vertex (0,4);
\vertex (0,5);
\vertex (0,6);
\vertex (0,7);
\vertex (1,1);
\vertex (1,3);
\draw (0,7) node {\llap{$e_{10}=(-\frac32,\frac12\frac12\frac12\frac12\frac12\frac12,-\frac32;12)$}\kern15pt};
\draw (0,6) node {\llap{$e_1=({+}{-}000000;00)$}\kern15pt};
\draw (0,5) node {\llap{$e_2=(0{+}{-}00000;00)$}\kern15pt};
\draw (0,4) node {\llap{$e_3=(00{+}{-}0000;00)$}\kern15pt};
\draw (0,3) node {\llap{$e_4=(000{+}{-}000;00)$}\kern15pt};
\draw (0,2) node {\llap{$e_5=(0000{+}{-}00;00)$}\kern15pt};
\draw (0,1) node {\llap{$e_6=(00000{+}{-}0;00)$}\kern15pt};
\draw (0,0) node {\llap{$e_7=(000000{+}{-};10)$}\kern15pt};
\draw (1,1) node {\kern15pt\rlap{$(000000{+}{-};02)=e_9$}};
\draw (1,3) node {\kern15pt\rlap{$(\frac-2\frac-2\frac-2\frac-2\frac+2\frac+2\frac+2\frac+2;00)=e_8$}};
\end{tikzpicture}%
\caption{Simple roots for $M=\Oup(\Lambda)_{\nu,\rhobar}$
where $\rho=(0;1,0)$ and
$\nu=(\frac+2\frac+2\frac+2\frac+2\frac+2\frac+2\frac+2\frac+2;00)$;
see the proof of theorem~\ref{thm-Kantor-and-Bertini-inside-245}.}
\label{fig-simple-roots-subgroup-of-245}
\end{figure}

Next we claim that $M$ contains the Coxeter group $M_0$ with simple
roots pictured in figure~\ref{fig-simple-roots-subgroup-of-245}.   First, $e_1,\dots,e_8$
are the simple roots of $W_{244}$, whose reflections lie in $M$ by
definition.  The proof that $M$ contains the reflection in $e_9$ is
exactly the same as in the proof of theorem~\ref{thm-unnodal-case}.  (Only the Kantor
involutions are needed.)  For $e_{10}$, observe that it and
$e_2,\dots,e_8$ span a copy of the lattice $A_1\oplus E_7$.  
Furthermore, $(e_{10}+e_5+e_7+e_8)/2$ lies in $\Lambda$, so 
the saturation of this $A_1\oplus E_7$ is a copy of $E_8$.
The reflection in $e_{10}$ is equal to
the Bertini involution of this $E_8$, times the central
involution of the copy of $W(E_7)\sset W_{244}$ generated by the reflections in
$e_2,\dots,e_8$. 
Therefore $M$
contains this reflection.

The same argument as in the proof of theorem~\ref{thm-unnodal-case} shows that
$M_0=M=\Oup(\Lambda)_{\nu,\rhobar}$.  (This time the affine diagrams
are $\Dtilde_6\Atilde_1$ and two $\Etilde_7$'s.)
The final step of the proof is also conceptually the same as before.  Namely,
\begin{align*}
  [\Oup(\Lambda)_\nu:S]
  &{}=
  [\Oup(\Lambda)_\nu:M][M:S]
  \\
  &{}=[\Oup(\Lambda)_\nu:\Oup(\Lambda)_{\nu,\rhobar}]\bigl|\Orth(\nubar^\perp)_{\rhobar}\bigr|
  \\
  &{}=[\Orth(V)_\nubar:\Orth(V)_{\nubar,\rhobar}]\bigl|\Orth(\nubar^\perp)_{\rhobar}\bigr|
  \\
  &{}=[\Orth(\nubar^\perp):\Orth(\nubar^\perp)_{\rhobar}]\bigl|\Orth(\nubar^\perp)_{\rhobar}\bigr|
  \\
  &{}=|\Orth(\nubar^\perp)|.
\end{align*}
From this and the fact that $S$ acts trivially on $\nubar^\perp\sset
V$, it follows that $S$ is the full kernel  of
$\Oup(\Lambda)_\nu$'s action  on $\nubar^\perp$.
\end{proof}

\section{The $1$-nodality-class case}
\label{sec-1-nodal}

\noindent
In this section we continue to use the previous section's model for~$\Lambda$.  We suppose $X$ is an Enriques surface with a single nodality
class $\nubar\in V=\Lambda/2\Lambda$, and we fix some nodal root
$\nu\in\Lambda$ lying over it.

All roots of $\Lambda$ are equivalent under isometries (since
figure~\ref{fig-simple-roots-for-E10} is simply laced). So we may choose the identification
between $\Lambda$ and $X$'s numerical lattice such that $\nu$ is any
chosen root.  We choose
$\nu=(\frac+2\frac+2\frac+2\frac+2\frac+2\frac+2\frac+2\frac+2;0,0)$,
which is compatible with the previous section's notation.  To the simple roots from
figure~\ref{fig-simple-roots-245} we may adjoin two more roots, to
obtain simple roots for the larger Coxeter group  whose diagram appears
in in figure~\ref{fig-simple-roots-for-nu-bar-stabilizer}.  The extra simple roots are $\nu$ and $e_{10}'$.
We write $C$ for the fundamental chamber for these $11$ simple roots.
We define $W_{246}$ as the group generated by the reflections in the
top~$10$ roots, and continue writing $W_{244}$ and $W_{245}$ as
before.

\begin{figure}
\begin{tikzpicture}[line width=\mywidth, scale=.8]
\draw[line width=\heavywidth](0,-3) -- (0,-2);
\draw (0,-2) -- (0,6);
\draw (0,3) -- (1,3);
\vertex (0,-3);
\vertex (0,-2);
\vertex (0,-1);
\vertex (0,0);
\vertex (0,1);
\vertex (0,2);
\vertex (0,3);
\vertex (0,4);
\vertex (0,5);
\vertex (0,6);
\vertex (1,3);
\draw (0,6) node {\llap{$e_1=({+}{-}000000;0,0)$}\kern15pt};
\draw (0,5) node {\llap{$e_2=(0{+}{-}00000;0,0)$}\kern15pt};
\draw (0,4) node {\llap{$e_3=(00{+}{-}0000;0,0)$}\kern15pt};
\draw (0,3) node {\llap{$e_4=(000{+}{-}000;0,0)$}\kern15pt};
\draw (0,2) node {\llap{$e_5=(0000{+}{-}00;0,0)$}\kern15pt};
\draw (0,1) node {\llap{$e_6=(00000{+}{-}0;0,0)$}\kern15pt};
\draw (0,0) node {\llap{$e_7=(000000{+}{-};1,0)$}\kern15pt};
\draw (0,-1) node {\llap{$e_9'=(00000000;-1,1)$}\kern15pt};
\draw (0,-2) node {\llap{$e_{10}'=(\frac-2\frac-2\frac-2\frac-2\frac-2\frac-2\frac-2\frac-2;1,0)$}\kern15pt};
\draw (0,-3) node {\llap{$\nu=(\frac+2\frac+2\frac+2\frac+2\frac+2\frac+2\frac+2\frac+2;0,0)$}\kern15pt};
\draw (1,3) node {\kern15pt\rlap{$(\frac-2\frac-2\frac-2\frac-2\frac+2\frac+2\frac+2\frac+2;0,0)=e_8$}};
\end{tikzpicture}%
\caption{Simple roots for $\Oup(\Lambda)_\nubar$;
see lemma~\ref{lem-simple-roots-for-nu-bar-stabilizer}.}
\label{fig-simple-roots-for-nu-bar-stabilizer}
\end{figure}

\begin{lemma}
\label{lem-simple-roots-for-nu-bar-stabilizer}
  $\Oup(\Lambda)_\nubar$ is the Coxeter group with simple roots
   in figure~\ref{fig-simple-roots-for-nu-bar-stabilizer}.
\end{lemma}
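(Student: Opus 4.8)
The plan is to follow the proof of Vinberg's theorem~\ref{thm-Vinberg-237}. Write $W$ for the Coxeter group with simple roots in figure~\ref{fig-simple-roots-for-nu-bar-stabilizer}. I will first check $W\sset\Oup(\Lambda)_\nubar$ and then force equality by a cusp count together with a single stabilizer computation. The inclusion is the easy half: the reflections in $e_1,\dots,e_8,e_9'$ generate $W_{245}=\Oup(\Lambda)_\nu\sset\Oup(\Lambda)_\nubar$ by theorem~\ref{thm-Vinberg-245}; the reflection in $\nu$ acts on $V$ as the transvection in $\nubar$ and so fixes $\nubar$; and since $\nu\cdot e_{10}'=2\equiv0$, the reflection in $e_{10}'$ acts on $V$ as a transvection that also fixes $\nubar$. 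Hence all eleven reflections fix $\nubar$.

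Next I count the cusps of $W$, i.e.\ its orbits on null vectors. The only non-simply-laced feature of the diagram is the heavy bond $e_{10}'{=}\nu$, which is an $\Atilde_1$, and the only branch node is $e_4$ (ruling out $\Dtilde_8$, which would need two); so the rank-$8$ affine subdiagrams are exactly the $\Etilde_8$ on $\{e_2,\dots,e_8,e_9',e_{10}'\}$ and the $\Etilde_7\sqcup\Atilde_1$ on $\{e_1,\dots,e_8\}\sqcup\{e_{10}',\nu\}$. (Finite volume of the chamber is the usual check that every facet meets the others in a spherical rank-$9$ or affine rank-$8$ subdiagram.) Thus $W$ has exactly two orbits on null vectors. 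On the other hand $\rhobar\cdot\nubar\in\F_2$---that is, $\rho\cdot\nu$ mod~$2$---is an invariant of the $\Oup(\Lambda)_\nubar$-orbit of a null vector $\rho$, and it takes both values, being $0$ for $\rho=(0;1,0)$ and $1$ for $\rho=(1,1,0{,}\dots{,}0;1,1)$. So $\Oup(\Lambda)_\nubar$ has at least two orbits on null vectors, and since $W\sset\Oup(\Lambda)_\nubar$ the two groups have exactly the same two orbits.

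It remains, as in the arguments for $M_0$, to match the stabilizer of one null vector; I use $\rho=(0;1,0)$. Starting from $\Oup(\Lambda)_\rho\iso E_8\rtimes W(E_8)$, the analogue of lemma~\ref{lem-null-vector-stabilizer-237-case} for the present model, and intersecting with the stabilizer of $\nubar$, a direct computation gives $\Oup(\Lambda)_{\nubar,\rho}=\Lambda_0^{(\nu)}\rtimes W(E_8)_\nubar$, where $\Lambda_0^{(\nu)}=\set{\lambda\in\Lambda_0}{\lambda\cdot\nu\equiv0}$ and $W(E_8)_\nubar$ is the stabilizer in $W(E_8)$ of $\nu$ mod~$2\Lambda_0$. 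Here $\Lambda_0^{(\nu)}$ is the index-$2$ sublattice $(\nu^\perp\cap\Lambda_0)\oplus\Z\nu=E_7\oplus A_1$, and $W(E_8)_\nubar=W(E_7)\times\gend{r_\nu}$ (the inclusion $\supseteq$ is clear, and equality follows on comparing orders). Therefore $\Oup(\Lambda)_{\nubar,\rho}=(E_7\oplus A_1)\rtimes(W(E_7)\times\gend{r_\nu})=W_{244}\times\gend{r_{e_{10}'},r_\nu}$, the subgroup generated by the ten reflections in the simple roots orthogonal to $\rho$ (all of them but $e_9'$). These ten reflections lie in $W$, so $\Oup(\Lambda)_{\nubar,\rho}\sset W$, and hence equals the stabilizer of $\rho$ in $W$. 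Having matched both the orbits and the stabilizer of $\rho$, the argument used for $M_0$ gives $\Oup(\Lambda)_\nubar=W$.

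I expect the last step to be the main obstacle: pinning down $\Oup(\Lambda)_{\nubar,\rho}$ exactly and recognizing it as the $\Etilde_7\sqcup\Atilde_1$ subgroup. Everything there rests on the two coincidences $\Lambda_0^{(\nu)}=E_7\oplus A_1$ and $W(E_8)_\nubar=W(E_7)\times\gend{r_\nu}$; the latter is what really needs the order comparison (the $W(E_8)$-orbit of $\nu$ mod~$2\Lambda_0$ has length $120$ among the anisotropic vectors of $\Lambda_0/2\Lambda_0$, giving $|W(E_8)_\nubar|=|W(E_8)|/120=2\,|W(E_7)|$), rather than being evident by inspection.
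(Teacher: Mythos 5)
Your proof is correct and follows essentially the same route as the paper's: inclusion via the even inner products with $\nu$, a cusp count giving two orbits on null vectors (the $\Etilde_8$ and $\Etilde_7\Atilde_1$ subdiagrams), the lower bound of two $\Oup(\Lambda)_\nubar$-orbits from the mod-$2$ invariant $\rhobar\cdot\nubar$, and the identification of $\Oup(\Lambda)_{\nubar,\rho}$ with the $\Etilde_7\Atilde_1$ subgroup. The paper merely compresses your last step into ``the proof is just like lemma~\ref{lem-null-vector-stabilizer-237-case}, except that only the translations by $E_7\oplus A_1$ survive,'' which is exactly the computation you carry out in detail.
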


\begin{proof}
  First, the affine subgroup $\Etilde_7\Atilde_1$ generated by the
  reflections in all the roots except $e_9'$ is the full
  $\Oup(\Lambda)_\nubar$-stabilizer of the null vector
  $\rho=(0;1,0)$.  The proof is just like lemma~\ref{lem-null-vector-stabilizer-237-case}, and one can
  even use the same formula \eqref{eq-translations-in-E9} for the translations.  The main
  difference is that only half of the translations preserve $\nubar$.
  The ones that do correspond to the sublattice $E_7\oplus A_1$ of $E_8$.

 The Coxeter group lies in
 $\Oup(\Lambda)_\nubar$ because  every simple root has even inner
 product with $\nu$.  Cusp-counting shows that the Coxeter
 group has two orbits on null vectors, corresponding to the diagrams
 $\Etilde_8$ and $\Etilde_7\Atilde_1$.  And $\Oup(\Lambda)_\nubar$ has
 at least two orbits on null vectors, since it has two orbits on
 isotropic vectors in $V$  (those orthogonal to $\nubar$ and
 those not).  It follows that $\Oup(\Lambda)_\nubar$
 and this reflection subgroup have the same orbits on null vectors.
 The equality of these groups follows because their subgroups stabilizing $\rho$ are
 equal.
\end{proof}

\begin{lemma}
  \label{lem-nef-cone-contains-future-cone-of-nu-perp}
  The cone $\nef(X)$ contains the entire future cone of $\nu^\perp$.
\end{lemma}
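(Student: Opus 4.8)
The plan is to argue directly from the definition $\nef(X)=\set{x}{x\cdot r\ge0\text{ for every nodal root }r}$, showing that every positive-norm future-directed $x\in\nu^\perp$ pairs nonnegatively with each nodal root. The one place the hypothesis enters is decisive: since $X$ has a single nodal class, every nodal root $r$ satisfies $r\equiv\nu\pmod{2\Lambda}$. The case $r=\nu$ is immediate, since such an $x$ has $x\cdot\nu=0$. So fix a nodal root $r\neq\nu$ and let $x$ lie in the future cone of $\nu^\perp$.

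First I would establish a lower bound on $c:=r\cdot\nu$. Because $\nu$ and $r$ are the classes of distinct nodal curves, $c\ge0$; writing $r=\nu+2\mu$ with $\mu\in\Lambda$ shows $c=-2+2\,\mu\cdot\nu$ is even. I would then rule out $c=0$: if $r\in\nu^\perp$ then $\mu\cdot\nu=1$, and expanding $r^2=-2$ forces $\mu^2=-1$, which is impossible in the even lattice $\Lambda$. Hence $c\ge2$ for every nodal root other than $\nu$.

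Next, project $r$ into $\nu^\perp$ by setting $w=r+\tfrac{c}{2}\nu\in\nu^\perp\tensor\R$. A short computation using $\nu^2=-2$ gives $w^2=-2+c^2/2\ge0$, the inequality being exactly where $c\ge2$ is used. Since $x\cdot\nu=0$ we have $x\cdot r=x\cdot w$, so the lemma reduces to showing that $w$ lies in the \emph{closed} future cone of $\nu^\perp$: the future cone in a space of signature $(1,8)$ is self-dual, so $x\cdot w\ge0$ whenever both $x$ and $w$ are future-directed of nonnegative norm.

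The hard part will be pinning down the future-directedness of $w$, since $w^2\ge0$ alone only places $w$ in one of the two cones. I would settle this using effectivity: $\nu$ and $r$ are both classes of effective (nodal) curves, so both pair strictly positively with the image $a$ of an ample class, whence $w\cdot a=r\cdot a+\tfrac{c}{2}\,\nu\cdot a>0$ because $c>0$. As $w^2\ge0$ and $w\cdot a>0$ for the future-directed $a$, the vector $w$ indeed lies in the closed future cone of $\nu^\perp$, which completes the reduction and yields $x\cdot r=x\cdot w\ge0$. Running this over all nodal roots $r$ gives $x\in\nef(X)$; and since $\nef(X)$ is closed, the containment extends from the open future cone to the entire future cone of $\nu^\perp$.
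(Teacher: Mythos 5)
Your proof is correct, and its first half is exactly the paper's: you show that every nodal root $r\neq\nu$ has $c=r\cdot\nu$ even (since $(r-\nu)/2\in\Lambda$), nonnegative (distinct nodal curves), and nonzero (else $(r-\nu)/2$ would have norm $-1$ in the even lattice), hence $c\geq2$. Where you diverge is in converting this bound into the containment. The paper notes that $c\geq2$ means the hyperplanes $\nu^\perp$ and $r^\perp$ are disjoint in $H^9$, so the wall $\nu^\perp$ of the $\Wnodal$-chamber $\nef(X)$ is crossed by no other wall and therefore lies entirely in the closed chamber. You instead carry out the verification by hand: the projection $w=r+\frac{c}{2}\nu$ of $r$ into $\nu^\perp$ has norm $-2+c^2/2\geq0$ (which is precisely the algebraic form of the disjointness of the two hyperplanes), is future-directed because it pairs positively with an ample class, and hence pairs nonnegatively with every future-directed $x\in\nu^\perp$ by the self-duality of the closed light cone (reverse Cauchy--Schwarz). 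Your version is longer but self-contained: it trades the paper's appeal to the hyperbolic geometry of chamber walls for a direct cone computation, at the cost of one extra input, the effectivity of the nodal classes, which you correctly invoke to orient $w$.
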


  Recall that $\nef(X)$ is the Weyl chamber for the group
  $\Wnodal\sset\Orth(\Lambda)$ generated by the reflections in the nodal
  roots.  (These reflections do not arise from symmetries of $X$, but
  they are isometries of $\Lambda$.)

  \begin{proof}
  Because $X$ has a single nodality class $\nubar$, every nodal root
  represents it.  In particular, if $\nu'$ is any nodal root then
  $\Lambda$ contains $(\nu-\nu')/2$, and \hbox{$\nu\cdot\nu'$} is even (since
  $\nubar$ is isotropic).  We cannot have $\nu\cdot\nu'=0$, because
  then $(\nu-\nu')/2$ would have norm~$-1$, which is impossible in the
  even lattice $\Lambda$.  For $\nu'\neq\nu$ this implies $\nu\cdot\nu'\geq2$, so
  their orthogonal complements do not intersect in
  hyperbolic space $H^9$.  Since the orthogonal complements of the
  nodal roots bound $\nef(X)$,  their future cones lie
  in it.  In particular, $\nef(X)$ contains the future cone of $\nu^\perp$.
\end{proof}

\begin{lemma}
\label{lem-nef-cone-contains-246-translates-of-C}
  All the $W_{246}$-translates of~$C$ lie in $\nef(X)$.
\end{lemma}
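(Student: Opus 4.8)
The plan is to prove two things: that the fundamental chamber $C$ itself lies in $\nef(X)$, and that no nodal wall is ever crossed when $C$ is moved around by $W_{246}$.

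First I would show $C\subseteq\nef(X)$. Since every nodal reflection acts on $V$ by the transvection in $\nubar$, which fixes $\nubar$, the group $\Wnodal$ is a subgroup of $\Oup(\Lambda)_\nubar$. Hence the chambers of $\Oup(\Lambda)_\nubar$ refine those of $\Wnodal$, and $C$ lies in a single $\Wnodal$-chamber, i.e.\ a single $\Wnodal$-translate of $\nef(X)$. To pin down which one, I would use lemma~\ref{lem-nef-cone-contains-future-cone-of-nu-perp}: the facet of $C$ lying on $\nu^\perp$ (present because $\nu$ is one of the simple roots in figure~\ref{fig-simple-roots-for-nu-bar-stabilizer}) is contained in the future cone of $\nu^\perp$, hence in $\nef(X)$. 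A generic point $p$ of this facet satisfies $p\cdot\nu=0$ and, because distinct nodal walls are disjoint in $H^9$ (as in the proof of lemma~\ref{lem-nef-cone-contains-future-cone-of-nu-perp}), $p\cdot\nu'>0$ for every other nodal root $\nu'$. So near $p$ the only nodal wall is $\nu^\perp$, and points just inside $C$ lie in the interior of $\nef(X)$. Therefore the $\Wnodal$-chamber containing $C$ is $\nef(X)$ itself.

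Next I would extend this to all translates by a wall-crossing argument. Writing a word $w=r_{e_{i_1}}\cdots r_{e_{i_k}}$ in the generators of $W_{246}$, I would follow the gallery $C,\,r_{e_{i_1}}C,\,\dots,\,wC$; consecutive chambers are adjacent across the walls $r_{e_{i_1}}\cdots r_{e_{i_{j-1}}}(e_{i_j}^\perp)$, each of which is the mirror of a reflection lying in $W_{246}$. Because $\nef(X)$ is a union of $\Oup(\Lambda)_\nubar$-chambers whose boundary lies on nodal walls, crossing a \emph{non-nodal} wall from a chamber inside $\nef(X)$ lands in an adjacent chamber still inside $\nef(X)$ (the shared facet, not being on a nodal wall, is interior to $\nef(X)$). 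So it suffices to check that no mirror of a reflection in $W_{246}$ is a nodal wall; then the entire gallery stays in $\nef(X)$ and $wC\subseteq\nef(X)$.

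The crux, and the step I expect to be the main obstacle, is precisely this last claim: no nodal root lies in the root system of $W_{246}$. I would prove it by reducing mod~$2$. Reduction $\Lambda\to V$ is equivariant for $W_{246}$, and every element of $W_{246}$ fixes $\nubar$, since $W_{246}\subseteq\Oup(\Lambda)_\nubar$. So if a root $w(e_i)$ with $w\in W_{246}$ reduced to $\nubar$, then applying $w^{-1}$ would force $\bar e_i=\nubar$. But a direct check shows $e_i-\nu\notin 2\Lambda$ for each of the ten simple roots $e_i$ of $W_{246}$ (the difference always has a half-integer or an odd integer coordinate), so $\bar e_i\neq\nubar$. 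Hence no root in the $W_{246}$-orbit of the $e_i$ reduces to $\nubar$, whereas every nodal root does; so no nodal root is a $W_{246}$-root, as required. Equivalently, once $C\subseteq\nef(X)$ is known, this shows every nodal root is a positive root of $\Oup(\Lambda)_\nubar$ lying outside the root subsystem of $W_{246}$, so $W_{246}$ keeps it positive and $wC\subseteq\{x\mid x\cdot\nu'\ge0\}$ for every nodal $\nu'$, giving $wC\subseteq\nef(X)$ again.
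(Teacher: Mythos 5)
Your proof is correct, and the second half takes a genuinely different route from the paper's. The first half --- placing a generic point of the facet of $C$ on $\nu^\perp$ inside the future cone of $\nu^\perp$, noting it meets no other nodal wall, and concluding $C\subseteq\nef(X)$ --- is essentially the paper's argument. For the extension to all of $W_{246}(C)$, the paper instead introduces the auxiliary Coxeter group $W'$ generated by the reflections in \emph{all} roots of $\Lambda$ lying over $\nubar$ (so $\Wnodal\subseteq W'\subseteq \Oup(\Lambda)_\nubar$), observes that $W_{246}$ normalizes $W'$ and that each generator of $W_{246}$ fixes an interior point of the $W'$-chamber $C'\supseteq C$, whence $W_{246}$ preserves $C'$ and $W_{246}(C)\subseteq C'\subseteq\nef(X)$. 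Your gallery/wall-crossing argument replaces this with the claim that no mirror of a reflection in $W_{246}$ is a nodal wall, proved by reducing mod~$2$. Both arguments ultimately rest on the same key fact --- that none of the ten simple roots of $W_{246}$ reduces to $\nubar$ --- but yours works directly against $\Wnodal$ and makes that fact explicit, at the cost of a somewhat longer convexity argument; the paper's version is shorter once $W'$ is in place and also sets up its later remark that $W'=\Wnodal$. One small imprecision in your verification: for $e_8-\nu=(-1,-1,-1,-1,0,0,0,0;0,0)$ the reason it is not in $2\Lambda$ is that its coordinates have mixed parity (note that $2\nu$ itself lies in $2\Lambda$ and has all coordinates odd), not merely that some coordinate is an odd integer; likewise for $e_{10}'-\nu$ the decisive point is that its last two coordinates are not both even. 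The claims themselves are correct.
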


\begin{proof}
  Every nodal root has the same
  nodality class $\nubar$, so all their reflections preserve it.  So the
  mirrors of $\Wnodal$ are among the mirrors of
  $\Oup(\Lambda)_\nubar$.  It follows that every chamber for
  $\Wnodal$
  is a union of chambers for
  $\Oup(\Lambda)_\nubar$.
  In particular, $\nef(X)$ is such a union.
  
  From this and lemma~\ref{lem-nef-cone-contains-future-cone-of-nu-perp} it follows that
  $\nef(X)$ contains every chamber of $\Oup(\Lambda)_\nubar$ that lies
  on the positive side of $\nu^\perp$ and has one of its facets lying
  in $\nu^\perp$.  In particular, $\nef(X)$ contains $C$.

  Now consider the following Coxeter group $W'$ lying between
  $\Wnodal$ and $\Oup(\Lambda)_\nubar$: the one generated by the
  reflections in all the roots of $\Lambda$ lying over $\nubar$.
  Writing $C'$ for its chamber containing $C$, we obviously have
  $C'\sset\nef(X)$.  Therefore it suffices to show that $C'$ contains
  the $W_{246}$-translates of $C$.  The advantage of $W'$ over
  $\Wnodal$ is that $W_{246}$ visibly normalizes it, hence permutes
  its chambers.  Suppose $r$ is any simple root from
  figure~\ref{fig-simple-roots-for-nu-bar-stabilizer} other than
  $\nu$.  Then a generic point of $r^\perp$ is orthogonal to no roots
  except $r$, hence to no roots of $W'$.  So $r$'s reflection
  preserves an interior point of $C'$, hence $C'$ itself.  It follows
  that $W_{246}$ preserves~$C'$, so $C'$ contains the
  $W_{246}$-translates of $C$, as desired.  (Parts \eqref{item-nef-is-union-of-W246-translates} and
  \eqref{item-transitivity-on-facets} of theorem~\ref{thm-1-nodal-case} imply $W'=\Wnodal$.)
\end{proof}

Next we will describe some symmetries of $X$, called Geiser, Bertini
and Kantor involutions.  They are defined in terms of nef
classes and nodal curves with certain properties; see the proof of
theorem~5 in \cite{Dolgachev-introduction-to-Enriques-surfaces} for
the details of
their construction.  We will describe their actions on $\Lambda$.  Their
actions on $V$ follow easily: Geiser and Bertini involutions act
trivially and Kantor involutions act by the transvection in
$\nubar$.

Geiser involutions: suppose $E_1$, $E_2$ are nef divisors with
$E_1^2=E_2^2=0$ and $E_1\cdot E_2=1$, such that $E_1+E_2$ is ample.
Then the linear system $|2E_1+2E_2|$ realizes $X$ as a $2$-fold
branched cover of the unique $4$-nodal quartic del Pezzo surface.  (It
can be defined in $\P^4$ by $0=x_0x_1 +x_2^2 = x_3x_4 +x_2^2$.)  The
deck transformation of this covering is an automorphism $G$ of $X$,
called a Geiser involution.  Its action on $\Lambda$ can be described
as follows.  The classes of $E_1,E_2$ in $\Lambda$ span a summand
isometric to
$U\iso\bigl(\begin{smallmatrix}0&1\\1&0\end{smallmatrix}\bigr)$, and
  $G$ acts by the negation map of this summand's orthogonal
  complement.

Bertini involutions: now suppose $E$ is a nef divisor with $E^2=0$,
and that $R$ is a nodal curve having intersection number~$1$ with it.
Then the linear system $|4E+2R|$ realizes $X$ as a $2$-fold branched
cover of a degenerate form of the previous paragraph's del Pezzo
surface.  (Its equations in $\P^4$ are $0=x_0x_1 +x_2^2= x_3x_4
+x_0^2$.)  The deck transformation is an automorphism $B$ of $X$,
called a Bertini involution.  Its action on $\Lambda$ can be described
as follows.  The classes of $E$ and $R$ span a summand $U$ of
$\Lambda$, and $B$ acts by negating its orthogonal complement.  
(This resembles the Geiser involution case if one
thinks of $E_1$ as $E$ and $E_2$ as the image of $E$ under reflection
in $R$.  The differences are that $E_2$ is not nef and
$R\cdot(E_1+E_2)=0$.  In particular, $|2E_1+2E_2|$ collapses $R$ to a
point.)

Kantor involutions: now suppose $E_1$ and $E_2$ are nef divisors with
$E_1^2=E_2^2=0$ and $E_1\cdot E_2=1$, and that $R$ is a nodal curve
disjoint from them.  Then the linear system $|2E_1+2E_2-R|$ realizes
$X$ as a $2$-fold branched cover of the Cayley cubic (the unique cubic
surface with four $A_1$ singularities).  The deck transformation of
this covering is an automorphism $K$ of $X$, called a Kantor
involution.  Its action on $\Lambda$ can be described as follows.  The
classes of $E_1$ and $E_2$ in $\Lambda$ generate a summand isometric
to $U$, and $K$ acts as the composition of the negation map on its
orthogonal complement and the reflection in the nodal root
corresponding to $R$.

\begin{remarks}
In section~\ref{sec-preparation-for-1-nodal} we introduced some isometries of $\Lambda$ that we
called Bertini and Kantor involutions.  As the language suggests, they
are special cases of the Bertini and Kantor involutions given here:

A Bertini involution in section~\ref{sec-preparation-for-1-nodal} meant an involution of
$\Lambda$ whose
negated lattice is isometric to $E_8$ and orthogonal to $\nu$.  First
we give an example of such an involution arising from the construction
above.  Consider the sublattice $L$ of $\Lambda$ spanned by the roots
of the $E_8$ subdiagram of figure~\ref{fig-simple-roots-for-nu-bar-stabilizer}.
Write $B$ for its Bertini involution in the sense of section~\ref{sec-preparation-for-1-nodal}:
it negates $L$ and fixes $L^\perp$ pointwise.
Computation shows that $L^\perp$
is spanned by $\nu$ and the null vector $E=(-1,0,0,0,0,0,0,-1;\discretionary{}{}{}1,1)$,
which have inner product~$1$.  It is easy to check that $E$ has inner
product${}\geq0$ with the simple roots in figure~\ref{item-transitivity-on-facets}, so it lies in
$C$ and hence is nef (lemma~\ref{lem-nef-cone-contains-246-translates-of-C}).  The Bertini involution
constructed above, using $E$ and $R=\nu$, is exactly $B$.

Now consider any Bertini involution $B'$ in the sense of
section~\ref{sec-preparation-for-1-nodal}, and write $L'\iso E_8$ for its negated lattice.  Any
two copies of the $E_8$ lattice in $\nu^\perp$ are equivalent under
isometries of $\nu^\perp$, because each is a direct summand (being
unimodular), whose $1$-dimensional complement in $\nu^\perp$ 
has the same determinant as $\nu^\perp$, namely~$2$.  Therefore some
$g\in\Oup(\nu^\perp)$ sends $L$ to $L'$.  Since
$\Oup(\nu^\perp)=W_{245}$ (theorem~\ref{thm-Vinberg-245}) and $W_{245}$ preseves
$\nef(X)$ (lemma~\ref{lem-nef-cone-contains-246-translates-of-C}), $g(E)$ is also nef.   The Bertini
involution constructed above, using $g(E)$ and $R=\nu$, is~$B'$.

Finally, suppose $K$ is a Kantor involution in the sense of section~\ref{sec-preparation-for-1-nodal},
so its negated lattice is the first summand of some decomposition
$\nu^\perp=E_7\oplus U$.  We take $E_1$, $E_2$ to be 
null vectors spanning the $U$ summand.  By lemma~\ref{lem-nef-cone-contains-future-cone-of-nu-perp} they are nef.
Then the Kantor involution constructed above from $E_1$ and $E_2$ is~$K$.
\end{remarks}

\begin{proof}[Proof of theorem~\ref{thm-1-nodal-case}]
The main step is to prove \eqref{item-Gamma-contains-W-bar-246(2)}.  For this we reuse the strategy of
theorem~\ref{thm-unnodal-case}.  We think of $W_{246}$ as the ``large'' group, and for
the ``small'' subgroup $S$ we take the subgroup generated by the
Geiser, Bertini and Kantor involutions of $X$ that lie in $W_{246}$.
(In fact these are all the Geiser, Bertini and Kantor involutions, but
we have not proven this.)
To relate these groups we define the ``medium'' group $M$ as the
subgroup generated by $S$ and $W_{245}$.  Obviously $M$ normalizes~$S$.

Theorem~\ref{thm-Kantor-and-Bertini-inside-245} says that $S\cap W_{245}$ contains the subgroup of
$W_{245}$ that acts trivially on $\nubar^\perp$.  Theorem~\ref{thm-Vinberg-245} says that
$W_{245}=\Oup(\Lambda)_\nu$.  And the image of $W_{245}$ in $\Orth(V)$
acts on $\nubar^\perp$ by its full isometry group.  Therefore the
action of $M$ on $V$ identifies $M/S$ with $\Orth(\nubar^\perp)=\Orth_9(2)$.

Now we claim that $M$ is all of $W_{246}$.  It contains $W_{245}$ by
definition, so it suffices to show that $M$ contains the reflection in
$e_{10}'$.  Observe that $e_2,\dots,e_8,e_{10}'$ span a root lattice
$E_7A_1$.  Its saturation is strictly larger, hence a copy of $E_8$,
because $\Lambda$ contains $(e_9-e_{10}'+e_7+e_5)/2$.  We will show in
the next paragraph that the negation map $G$ of this $E_8$ summand of
$\Lambda$ is a Geiser involution.  It lies in $W_{246}$ since it is
the product of the reflection in $e_{10}'$ and the central involution
of $W(E_7)$.  Therefore $G\in S\sset M$.  Since $M$ contains the
central involution of $W(E_7)$, the same decomposition of $G$ shows
that $M$ contains the reflection in $e_{10}'$.
Modulo the fact that $G$ is a Geiser involution,
this
completes the proof that $M=W_{246}$.  From our understanding of
$M/S$ it follows that $S$ is exactly the subgroup of $W_{246}$ that
acts trivially on $\nubar^\perp\sset V$.  This is definition of
$\Wbar_{\!246}(2)$, proving~\eqref{item-Gamma-contains-W-bar-246(2)}.

We must still show that $G$ is a Geiser involution.  To do this we
seek $E_1,E_2\in\nef(X)$ with $E_1^2=E_2^2=0$ and $E_1\cdot E_2=1$ and
$E_1+E_2$ ample, which span a copy of $U$ orthogonal to
$e_2,\dots,e_8,e_{10}'$.  Now, the orthogonal complement of
$e_2,\dots,e_8,e_{10}'$ has signature $(1,1)$, so it has only two
isotropic lines.  This determines $E_1$ and $E_2$  up to scaling.  In fact they are obvious from the
Dynkin diagram: they must be the null vectors representing the
$\Etilde_7\Atilde_1$ and $\Etilde_8$ cusps.  We already know that the
first is $(0,\dots,0;1,0)$, and one checks that the second is
$(-1,0,\dots,0,-1;1,1)$.     They have
inner product~$1$, hence span a copy of $U$.  They are nef because
they lie in $C$, so their  sum is nef too.  This sum also has odd inner product 
with $\nu$, hence nonzero inner product with every nodal root (since
all nodal roots are congruent mod~$2$).  Therefore the sum is ample,
so $G$ is a Geiser involution.
This completes the proof that
$M=W_{246}$ and hence the proof of~\eqref{item-Gamma-contains-W-bar-246(2)}.

Next we prove the rest.  The main point is that $W_{246,\nu}/S_\nu$
maps isomorphically to $W_{246}/S$, since $W_{246,\nu}=W_{245}$ acts
on $\nubar^\perp$ as $\Orth_9(2)$.  It follows that the $S$-orbit of
$\nu$ coincides with the $W_{246}$-orbit of $\nu$.  This shows
simultaneously that every facet of $\cup_{w\in W_{246}}w(C)$ is the
orthogonal complement of a nodal root, and that $S$ (hence $\Gamma$)
acts transitively on them.  These are claims
\eqref{item-nef-is-union-of-W246-translates} and
\eqref{item-transitivity-on-facets} of the theorem.
Claim \eqref{item-transitivity-on-nodal-curves} follows from
\eqref{item-transitivity-on-facets} and the bijection between nodal roots and
nodal curves.

For claim~\eqref{item-W246-is-full-stabilizer}, recall that every
nodal root lies over $\nubar$.  So the full $\Oup(\Lambda)$-stabilizer
of $\nef(X)$ must preserve $\nubar$.  Since $C$ is a fundamental
domain for $\Oup(\Lambda)_\nubar$ by
lemma~\ref{lem-simple-roots-for-nu-bar-stabilizer}, the stabilizer of
$\nef(X)$ is exactly the subgroup of $\Oup(\Lambda)_\nubar$ which
preserves $\nef(X)$. This is obviously $W_{246}$.
\end{proof}


\begin{thebibliography}{99}

\bibitem{ATLAS} Conway, J. H., Curtis, R. T., Norton, S. P., Parker,
  R. A., Wilson, R. A., {\it Atlas of finite groups.} Oxford
  University Press 1985.

\bibitem{Barth-Peters}
  Barth, W. and Peters, C., Automorphisms of Enriques surfaces, {\it
    Invent. Math.} {\bf 73} (1983) 383--411.

\bibitem{Bourbaki}
Bourbaki, Nicolas, {\it \'El\'ements de
  math\'ematique. Groupes et alg\`ebres de Lie. Chapitres 4, 5 et 6,}
  Masson, Paris, 1981. English translation: Lie groups and Lie
  algebras. Chapters 4–6, Springer-Verlag, Berlin, 2002
  
\bibitem{Coble}
  Coble, A., The ten nodes of the rational sextic and of the Cayley
  symmetroid, {\it American J. Math.} {\bf 41} (1919) 243--265.
  
\bibitem{Conway-Sloane}
  Conway, J. H. and Sloane, N. J. A., {\it Sphere Packings, Lattices and
  Groups.} Grundlehren der
  Mathematischen Wissenschaften {\bf 290}. Springer-Verlag, New York,
  1999.

\bibitem{CD-old}
  Cossec, F. and Dolgachev, I.,
{\it Enriques Surfaces. I.}
Progress in Mathematics {\bf 76}, Birkh\"auser, Boston, 1989.

\bibitem{CD-new} Cossec--Dolgachev, {\it Enriques Surfaces}, to
  appear. (Complete revision of \cite{CD-old})

\bibitem{Cossec-Dolgachev-published} 
Cossec, F. and Dolgachev, I., On automorphisms of nodal Enriques surfaces,
{\it Bull. Amer. Math. Soc. (N.S.)} {\bf 12} (1985) 247--249.

\bibitem{Dolgachev-introduction-to-Enriques-surfaces} Dolgachev, I., A
  brief introduction to Enriques surfaces, arxiv:1412.7744.
    
\bibitem{Vinberg-E10} Vinberg, E., Some arithmetical discrete groups
  in Lobachevskii spaces, in {\it Discrete Subgroups of Lie Groups and
    Applications to Moduli}, Oxford U.\ Press 1975, 323--348.

\end{thebibliography}
\end{document}